\newlength{\intwidth}
\DeclareRobustCommand{\Bint}
   {\mathop{%
      \text{%
        \settowidth{\intwidth}{$\int$}%
        \makebox[0pt][l]{\makebox[\intwidth]{$-$}}%
        $\int$}}}
\newcommand{\pd}{\partial}
\newcommand\om\omega
\newcommand\te\theta
\newcommand\Te\Theta
\newcommand{\al}{\alpha}
\newcommand{\si}{\sigma}
\newcommand{\Si}{\Sigma}
\def\RR{\mathbb{R}}
\def\ZZ{\mathbb{Z}}
\newcommand{\snabla}{\slashed{\nabla}}
\renewcommand\ell{{ l }}
\def\cN{\mathcal N}
\def\hu{\widehat u}
\def\cS{\mathcal S}
\newcommand\ep\epsilon
\newcommand\de\delta
\newcommand\La\Lambda
\newcommand\De\Delta
\newcommand\CC{\mathbb C}
\newcommand\Ga\Gamma
\newcommand\cE{\mathcal E}
\newcommand\R{_{\mathrm{R}}}
\newcommand\I{_{\mathrm{I}}}
\newcommand{\triple}[1]{{\left\vert\kern-0.25ex\left\vert\kern-0.25ex\left\vert #1
        \right\vert\kern-0.25ex\right\vert\kern-0.25ex\right\vert}}
\renewcommand{\eqref}[1]{%
  \begingroup%
  \let\ref\@refstar%
  \hyperref[#1]{%
    Equation%
    ~\originaleqref{#1}%
  }%
  \endgroup
}
\newcommand{\bR}{\mathbb{R}}
\newcommand{\bE}{\mathbb{E}}
\newcommand{\bP}{\mathbb{P}}
\newcommand{\bPa}{\mathbb{P}_a}
\newcommand{\bPf}{\mathbb{P}_f}
\newcommand{\tmu}{\widetilde{\mu}}
\renewcommand*{\@fnsymbol}[1]{\ensuremath{\ifcase#1\or *\or \star\or ***\or
   \mathsection\or \mathparagraph\or \|\or **\or \dagger\dagger
   \or \ddagger\ddagger \else\@ctrerr\fi}}
\newtheorem{theorem}{Theorem}[section]
\newtheorem{lemma}[theorem]{Lemma}
\newtheorem{proposition}[theorem]{Proposition}
\theoremstyle{definition}
\newtheorem{remark}[theorem]{Remark}
\numberwithin{equation}{section}
\def\id{\mathrm{id}}
\renewcommand{\SS}{{\mathbb{S}^{n-1}}}
\newcommand\nablaS{\nabla_{\mathbb{S}}}
\newcommand\DeS{\De_{\mathbb{S}}}
\renewcommand\leq\leqslant
\renewcommand\geq\geqslant
\title[Non-identically distributed monochromatic random waves]{Asymptotics for the nodal components\\ of non-identically distributed\\ monochromatic random waves}
\author{Alberto Enciso}
\address{Instituto de Ciencias Matem\'aticas, Consejo Superior de
  Investigaciones Cient\'\i ficas, 28049 Madrid, Spain}
\email{aenciso@icmat.es}
\author{Daniel Peralta-Salas}
\address{Instituto de Ciencias Matem\'aticas, Consejo Superior de
 Investigaciones Cient\'\i ficas, 28049 Madrid, Spain}
\email{dperalta@icmat.es}
\author{\'Alvaro Romaniega}
\address{Instituto de Ciencias Matem\'aticas, Consejo Superior de
 Investigaciones Cient\'\i ficas, 28049 Madrid, Spain}
\email{alvaro.romaniega@icmat.es}
\begin{document}
\maketitle
\begin{abstract}
  We study monochromatic random waves on~$\RR^n$ defined by Gaussian
  variables whose variances tend to zero sufficiently fast. This has
  the effect that the Fourier transform of the monochromatic wave is
  an absolutely continuous measure on the sphere with a suitably
  smooth density, which connects the problem with the scattering
  regime of monochromatic waves. In this setting, we compute the
  asymptotic distribution of the nodal components of
  random monochromatic waves, showing that the number of nodal
  components contained in a large ball~$B_R$ grows asymptotically like
  $R/\pi$ with probability $p_n>0$, and is bounded uniformly in~$R$
  with probability $1-p_n$ (which is positive if and only if
  $n\geq3$). In the latter case, we show the existence of a unique
  noncompact nodal component. We also provide an explicit sufficient
  stability criterion to ascertain when a more general Gaussian
  probability distribution has the same asymptotic nodal distribution
  law.
\end{abstract}

\section{Introduction}

The Nazarov--Sodin theory, whose original motivation was to understand
the nodal set of random spherical harmonics of large
order~\cite{NS09}, has been significantly extended to derive
asymptotic laws for the distribution of the zero set of smooth
Gaussian functions of several variables. The primary examples are the
restriction to large balls of translation-invariant Gaussian functions
on~$\RR^n$ and various Gaussian ensembles of large-degree polynomials
on the sphere or on the torus.

From the point of view of applications, a particularly relevant
problem that falls within this framework is that of monochromatic
random waves, that is, solutions to the Helmholtz equation on~$\RR^n$ ($n\geq2$):
\begin{equation}\label{Helmholtz}
\De u + u=0\,.
\end{equation}
Since it is well known that any polynomially bounded solution to this
equation is the Fourier transform of a distribution supported on the
unit sphere~$\SS$, the way one constructs monochromatic random waves
is the following~\cite{CS19}. One starts with a real-valued orthonormal basis of
spherical harmonics on~$\SS$, which we denote by $Y_{lm}$. Hence
$Y_{lm}$ is an eigenfunction of the spherical Laplacian with
eigenvalue $l(l+n-2)$, the index $l$ is a nonnegative integer and $m$
ranges from~1 to the multiplicity
$d_l := \frac{2l+n-2}{l+n-2} \binom{l+n-2}{l}$ of the corresponding
eigenvalue.

To consider a monochromatic
random wave, one now takes
\begin{subequations}\label{random}
\begin{equation}\label{random1}
f(\xi):= \sum_{l=0}^\infty\sum_{m=1}^{d_l} i^l \, a_{\ell m}\,Y_{lm}(\xi)\,,
\end{equation}
where $a_{l m}$ are independent random variables, and
defines~$u$ as the Fourier transform of~$f\, dS$, where $dS$ is
the area measure of the unit sphere~$\SS$. This is tantamount to setting
\begin{equation}\label{random2}
u(x)=(2\pi)^{\frac n2} \sum_{l=0}^\infty\sum_{m=1}^{d_l} a_{lm} \,
Y_{lm}\bigg(\frac x{|x|}\bigg) \,\frac{J_{l+\frac n2-1}(|x|)}{|x|^{\frac n2-1}}\,,
\end{equation}
\end{subequations}
Note that~$u$ is real-valued if
the random variables~$a_{lm}$ are. In the Nazarov--Sodin theory, one
assumes that the random variables $a_{lm}$ are independent standard
Gaussians (i.e., of zero mean and unit variance).

Let us denote by $N_u(R)$ (resp., $N_u(R;[\Si])$) the number of connected components of the nodal
set $u^{-1}(0)$ that are contained in the ball centered at the origin
of radius~$R$ (resp., and diffeomorphic to~$\Si$). Here $\Si$ is any smooth, closed, orientable
hypersurface $\Si\subset\RR^n$. It is obvious from the definition that $N_u(R;[\Si])$ only depends on
the diffeomorphism class~$[\Si]$ of the hypersurface. The central known results
concerning the asymptotic distribution of the nodal components of
monochromatic random waves can then be
summarized as follows (see also~\cite{Gayet,Wigmanbis,CS19} for related results):

\begin{theorem}\label{T.1}
Suppose that the random variables~$a_{lm}$ in~\eqref{random} are
independent standard Gaussian variables. Then:
\begin{enumerate}
\item Nazarov--Sodin's estimate for the number of nodal
  components~\cite{NS16}: There is a constant $\nu>0$ such that
  \[
\bP\Bigg(\lim_{R\to\infty}\frac{N_u(R)}{R^n}=\nu\Bigg)=1\,.
  \]

  \item Sarnak--Wigman's positive probability bound for the number of
   nodal sets of fixed topology~\cite{SW19}: For each smooth, closed, orientable
hypersurface $\Si\subset\RR^n$ there exists a constant $\nu([\Si])>0$,
depending only on the diffeomorphism class of~$\Si$, such that
\[
\bP\Bigg(\lim_{R\to\infty}\frac{N_u(R;[\Si])}{R^n}=\nu([\Si])\Bigg)=1\,.
\]
\end{enumerate}
\end{theorem}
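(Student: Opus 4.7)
The plan is to verify the hypotheses of the Nazarov--Sodin ergodic framework for the stationary Gaussian field defined by \eqref{random2}, and then invoke its topological refinement due to Sarnak--Wigman. Three basic ingredients must be checked: (a) translation invariance of the covariance; (b) ergodicity of the $\RR^n$-action on field realisations; and (c) a positivity input guaranteeing that components (of each prescribed topology, for (ii)) appear with positive probability in bounded balls. The covariance is computed directly from \eqref{random2} using the rotation invariance of the standard Gaussian measure on the coefficients $a_{lm}$:
\[
\bE[u(x)u(y)] = c_n \, \frac{J_{\frac{n}{2}-1}(|x-y|)}{|x-y|^{\frac{n}{2}-1}},
\]
which depends only on $x-y$, so $u$ is centered and translation invariant; decay of the Bessel kernel yields mixing and hence ergodicity.

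For statement (i), I would run the Nazarov--Sodin sandwich. Given $r \gg 1$, tile $B_R$ by translates $B(x_j,r)$ with $j=1,\dots,M_R$ and $M_R \sim R^n/r^n$. Components fully contained in $B_R$ either lie inside a single $B(x_j,r)$ (lower bound) or meet at least one (upper bound), up to boundary corrections of order $R^{n-1}$. Dividing by $R^n$ and applying the mean ergodic theorem to the stationary random variables $\omega \mapsto N_{u(\cdot+x_j)}(B(0,r))$ yields an almost sure limit $\nu(r) := \bE[N_u(B_r)]/|B_r|$. Sandwich monotonicity in $r$ then identifies a single constant $\nu := \lim_{r\to\infty} \nu(r)$. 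Positivity $\nu>0$ is obtained by exhibiting a deterministic monochromatic wave possessing a compact, structurally stable oval as a nodal component, and showing that such a configuration is realised in any $B_r$ with strictly positive Gaussian probability; this uses that the Gaussian measure has full support in a suitable $C^k_{\mathrm{loc}}$ topology, which in turn follows from the non-degeneracy of the covariance above.

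For statement (ii), the same machinery applies to the refined counter $N_u(R;[\Si])$, provided one can guarantee $\bE[N_u(B_r;[\Si])] > 0$ for all sufficiently large $r$. This reduces, via a fixed-centre rescaling, to producing a \emph{single} nodal component diffeomorphic to $\Si$ in some monochromatic wave: once such a component exists and is structurally stable, positive probability again follows from the full-support property. The topological realisation step is settled in \cite{SW19} by passing to the Berry random wave scaling limit and invoking an $h$-principle / Whitney-style realisation showing that every diffeomorphism class of closed orientable hypersurface in $\RR^n$ appears as a nodal component of some band-limited Gaussian field with positive probability.

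I expect the main obstacle to be precisely this topological realisation step for part (ii): constructing a monochromatic wave whose nodal set contains a component of arbitrary prescribed diffeomorphism type, and verifying its structural stability. The remainder --- covariance computation, ergodicity, the integral-geometric sandwich, the passage from expectation to almost sure convergence, and the full-support argument for positivity in (i) --- is a direct transcription of the Nazarov--Sodin methodology to the concrete translation-invariant Gaussian field produced by \eqref{random2}.
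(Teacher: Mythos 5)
Your outline follows essentially the same route that the paper itself indicates for Theorem~\ref{T.1} --- which, note, is not proved in the paper but quoted from \cite{NS16} and \cite{SW19}, with only a sketch of the gist in the introduction: the integral-geometric sandwich, ergodicity of the translation action, a Kac--Rice input, and positivity via a structurally stable deterministic nodal component. Two steps of your sketch, however, are not right as stated. First, in the upper bound of the sandwich you cannot count the components contained in $B_R$ by components \emph{meeting} some small ball $B(x_j,r)$ ``up to boundary corrections of order $R^{n-1}$'': the troublesome components are those crossing the boundaries of the roughly $R^n/r^n$ small balls, and a priori a single nodal component can meet a sphere $\partial B(x_j,r)$ in many pieces, so there is no deterministic $O(R^{n-1})$ bound. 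Nazarov--Sodin handle this by adding to the counter the number $\mathfrak N_{\tau_x u}(r)$ of critical points of the restriction $u|_{\partial B_r}$, whose expectation is $O(r^{n-1})$ by Kac--Rice; the resulting correction is of order $R^n/r$ in expectation, not $R^{n-1}$, and it is negligible only after dividing by $R^n$ and letting $r\to\infty$. Without this (or an equivalent) ingredient the upper bound does not close. Relatedly, to pass from expectations to an almost sure limit one needs the pointwise (Wiener-type) ergodic theorem for the $\RR^n$-action rather than the mean ergodic theorem; ergodicity itself can be justified either as you do (decay of the Bessel covariance gives mixing) or, as the paper recalls, by the Fomin--Grenander--Maruyama theorem, since the spectral measure $dS$ on $\SS$ has no atoms.

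Second, for part (ii) the realisation of an arbitrary diffeomorphism class $[\Si]$ as a nodal component is not obtained in \cite{SW19} by an $h$-principle carried out in the Berry scaling limit: as the paper points out, the key input is the deterministic construction of Enciso--Peralta-Salas \cite{EPS13}, which produces a solution of $\De u+u=0$ on $\RR^n$ having a structurally stable nodal component diffeomorphic to $\Si$; positive probability of seeing such a component in a fixed ball then follows from the full-support/locality argument you describe, exactly as in your part (i). With these two corrections your proposal coincides with the standard Nazarov--Sodin/Sarnak--Wigman proof that the paper cites.
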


\begin{remark}
More visually, this theorem asserts that, if $a_{lm}$ are independent
standard Gaussians, the number of nodal components contained in a
large ball is almost surely proportional to the volume. This
volumetric growth rate holds even if one only considers nodal
components of a fixed (compact) topology.
\end{remark}

Our objective in this paper is to understand the asymptotic
distribution of the nodal set of~$u$ when the random
variables~$a_{lm}$, which we will no longer assume to be identically
distributed, have different distribution laws. One obvious motivation
to consider this problem is that the Helmholtz equation~(\ref{Helmholtz}) plays a central role in Physics,
particularly in quantum
mechanics and electromagnetic theory via scattering problems and in stationary solutions of the
3D Euler equation through Beltrami fields~\cite{CK92,Acta,RS}. In these contexts (which
are clearly different from the study of high energy eigenfunctions on a
compact manifold and from problems in percolation theory), one is
interested in solutions with the sharp decay at infinity, which is captured
by imposing that the Agmon--H\"ormander seminorm
\[
\triple u:= \limsup_{R\to\infty}\bigg(\frac1R\int_{B_R}|u|^2\, dx\bigg)^{\frac12}
\]
is finite. As we recall in Appendix~\ref{A.regularity}, the decay
properties of~$u$ are closely related to the regularity of the
function~$f$ above; indeed, it is a classical result of
Herglotz~\cite[Theorem 7.1.28]{Hor15} that $\triple u<\infty$ if and
only if $u$ is the Fourier transform of a measure of the form $f\,
dS$ with $\|f\|_{L^2(\SS)}<\infty$.

However, it is easy to see that, when $a_{lm}\sim \cN(0,1)$ are
standard Gaussians, $f$ is almost surely not in~$L^2(\SS)$ by the law
of large numbers. This means that this choice of random variables is
very well suited to the study of random eigenfunctions on a compact
manifold, as it is known since Nazarov and Sodin's breakthrough paper
on spherical harmonics~\cite{NS09}, but precisely for this reason, it
cannot capture the features of random solutions to non-compact
problems in the scattering regime (i.e., with finite
Agmon--H\"ormander seminorm). Hence one would like to consider, at
least, the case where
\[
a_{lm}\sim \cN(0,\si_l^2)
\]
are independent Gaussian variables of zero mean but distinct variances
$\si_l^2$. The fall-off (or growth) of the covariance~$\si_l$ as
$l\to\infty$ is directly related to the expected regularity of~$f$;
indeed, the easiest calculation in this direction is that the expected value of the $H^s(\SS)$~norm
of~$f$ is
\begin{equation}\label{variancecond}
\bE(\|f\|_{H^s(\SS)}^2)=\sum_{l=0}^\infty{d_l} (1+l)^{2s}\si_l^2\,.
\end{equation}
Since $d_l=c_n l^{n-2} + O(l^{n-3})$ for large~$l$, a convenient way
of stating our main result is as follows:

\begin{theorem}\label{T.2}
Suppose that the random variables $a_{lm}$ in~\eqref{random} are
independent Gaussians $\cN(0,\si_l^2)$, where the variances satisfy
\begin{equation}\label{convsil}
\sum_{l=0}^\infty  (1+l)^{2s+n-2}\si_{l}^2<\infty
\end{equation}
for some $s>\frac{n+5}2$. Then $f\in H^s(\SS)$ almost surely, so in
particular $\triple u<\infty$. Furthermore:
\begin{enumerate}

\item There exists some probability $p_n$, with $p_2=1$ and $p_n\in (0,1)$ if $n\geq3$,
  such that
  \begin{align*}
    \bP\bigg(\lim_{R\to\infty} \frac{N_u(R)}R =\frac1\pi\bigg)&= p_n\,,\\[1mm]
    \bP\bigg(\lim_{R\to\infty} {N_u(R)} <\infty \bigg)&= 1-p_n\,.
  \end{align*}

\item If $\Si\subset\RR^n$ is a smooth, compact, orientable
  hypersurface, then
  \begin{align*}
\bP\bigg(\lim_{R\to\infty}\frac{N_u(R;[\Si])}R=\frac1\pi\bigg)&=p_n  &\text{if } [\Si]=[\SS]\,,\\[1mm]
\bP\bigg(\lim_{R\to\infty}{N_u(R;[\Si])}<\infty \bigg)&=1-p_n   &\text{if } [\Si]=[\SS]\,,\\[1mm]
       \bP\bigg( \lim_{R\to\infty}{N_u(R;[\Si])}<\infty\bigg)&=1  &\text{if } [\Si]\neq[\SS]\,.
  \end{align*}

\end{enumerate}
\end{theorem}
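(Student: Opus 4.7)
The strategy is to exploit the regularity $f\in H^s(\SS)$ with $s>\tfrac{n+5}{2}$ (hence $f\in C^2(\SS)$ by Sobolev embedding) in order to obtain a uniform asymptotic expansion for $u$ at infinity that reduces the nodal analysis of~$u$ to that of the zero set of~$f$ on~$\SS$. The claim that $f\in H^s(\SS)$ almost surely is routine from~\eqref{variancecond} together with~\eqref{convsil}. The core computation is then to insert the large-argument asymptotics of the Bessel functions $J_{l+n/2-1}$ into~\eqref{random2} termwise and sum the even-$l$ and odd-$l$ contributions separately, which after the book-keeping produced by the $i^l$ factors should yield, uniformly in $\theta\in\SS$,
\[
u(r\theta)=\frac{c_n}{r^{(n-1)/2}}\Bigl[\cos\!\bigl(r-\tfrac{(n-1)\pi}{4}\bigr)\,\mathrm{Re}\,f(\theta)+\sin\!\bigl(r-\tfrac{(n-1)\pi}{4}\bigr)\,\mathrm{Im}\,f(\theta)\Bigr]+O\!\bigl(r^{-(n+1)/2}\bigr),
\]
with analogous control on two radial and tangential derivatives (the explicit threshold $s>\tfrac{n+5}{2}$ is used precisely here). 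Thus, modulo a decaying prefactor, $u(r\theta)$ behaves like a monochromatic oscillation of amplitude $|f(\theta)|$ and phase determined by $\arg f(\theta)$.

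Let $\cA:=\{f\text{ has no zeros on }\SS\}$ and put $p_n:=\bP(\cA)$. On the event~$\cA$, applying the implicit function theorem to the expansion above shows that, for some deterministic $R_0$ depending on $\min_\SS|f|$, the nodal set of~$u$ in $\{|x|>R_0\}$ is a disjoint union of smooth closed hypersurfaces $\{r=\rho_k(\theta)\}$, $k\geq k_0$, with $\rho_k(\theta)=k\pi+\psi(\theta)+o(1)$, each diffeomorphic to~$\SS$. Counting these shells yields $N_u(R)/R\to 1/\pi$ and $N_u(R;[\SS])/R\to 1/\pi$, while components of any other topology are confined to~$B_{R_0}$ and so bounded in number. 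On the complementary event~$\cA^c$, the amplitude $|f(\theta)|$ vanishes (generically transversally) at some direction $\theta_0$, and a local analysis near each such zero should show that consecutive shells get joined through tubular bridges in the thin cone where $|f|<\epsilon$. Consequently, essentially all asymptotic shells are threaded into a \emph{single} connected noncompact nodal component, while only finitely many compact components survive.

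For the probability~$p_n$, I argue as follows. When $n=2$, $\SS=S^1$ is one-dimensional whereas $f:S^1\to\CC$ has complex codimension one, so its zero set is almost surely empty and $p_2=1$. When $n\geq 3$, a Gaussian support-theorem argument relying on the nondegeneracy of the covariance operator of~$f$ (implicit in the fact that every $\si_l$ is positive) should give both that $\{\min_\SS|f|>1\}$ has positive probability and that $\{f\text{ possesses a transverse zero}\}$ has positive probability, so $p_n\in(0,1)$.

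The main obstacle is the topological analysis on the event~$\cA^c$: one must rigorously justify that the asymptotic nodal shells coming from different ``good'' directions are joined into a \emph{single} noncompact component, rather than, for instance, producing several noncompact ends, and that no additional compact components are spuriously created during the reconnection. This will require a quantitative local model for~$u$ in a thin cone around a zero $\theta_0$ of~$f$---essentially treating $u$ there as a perturbation of an exactly solvable toy model that can be analyzed by a parametric Morse or isotopy lemma---together with careful bookkeeping of how the shells from different directions glue through the cone $\{|f(\theta)|<\epsilon\}$, in order to produce the precise dichotomy $\{1/\pi,\ \text{bounded}\}$ claimed in the theorem.
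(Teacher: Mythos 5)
Your strategy is the same as the paper's (a uniform Bessel-asymptotic expansion reducing the nodal analysis of $u$ to the zero set of $f$ on $\SS$, a dichotomy according to whether $f$ vanishes, and a Gaussian support argument for $p_n\in(0,1)$, $p_2=1$), but the proposal has a genuine gap exactly where you flag it: the event $\cA^c$ is not handled, and two ingredients are missing there. First, you need that almost surely on $\cA^c$ the zero set $f^{-1}(0)$ is \emph{regular} as the zero set of the $\RR^2$-valued map $(f\R,f\I)$, i.e.\ $\nablaS f\R$ and $\nablaS f\I$ are linearly independent on $f^{-1}(0)$; ``generically transversally'' is not free, and the paper proves it with a Bulinskaya-type lemma applied to the auxiliary nondegenerate Gaussian field $(f\R,f\I,\nablaS f\R-\lambda\nablaS f\I)$ after computing the covariance structure of $f$ and its gradient (the same computation is what makes your codimension count for $p_2=1$ rigorous). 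Second, the ``tubular bridges'' picture must be replaced by an actual argument. The paper's route: with $U:=f\R(\te)\cos(r-r_0)+f\I(\te)\sin(r-r_0)$, the regularity of $f^{-1}(0)$ gives $|\nabla U|\geq c>0$ on a neighborhood of $U^{-1}(0)$ (uniformly in $r$ by periodicity), so a noncompact version of Thom's isotopy theorem, combined with the $O(1/r)$ error bounds, yields $u^{-1}(0)\setminus B_R=\Phi_R\big(U^{-1}(0)\setminus B_R\big)$ for a global diffeomorphism close to the identity; and near a zero $\te_0$ of $f$ one takes coordinates $y_1=f\R$, $y_2=f\I$, writes $y_1+iy_2=\rho e^{i\phi}$ and $U=\rho\cos(r-r_0-\phi)$, so that locally $U^{-1}(0)$ is the codimension-two cone $\{\rho=0\}$ together with the helicoid $\{r=r_0+\phi+\tfrac\pi2\}$. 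This explicit local model is what shows that \emph{all} the distant shells are threaded into a single connected noncompact component, so $u^{-1}(0)\setminus B_R$ is connected; boundedness of $N_u(R)$ then follows because only finitely many nodal components can meet a fixed compact set, by analyticity of $u$.

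Note also why a naive perturbative/Morse argument in a thin cone does not suffice: the limiting zero set $U^{-1}(0)$ is itself singular as a hypersurface where $f$ vanishes (it contains the codimension-two cone), so stability cannot be run component-by-component on shells; the saving point is that the scalar function $U$ still has a uniformly nonvanishing gradient there, thanks to the angular derivatives $\nablaS f\R,\nablaS f\I$, and this is precisely what the almost-sure regularity of $f^{-1}(0)$ buys. Your remaining steps (the expansion of Proposition~\ref{P.stationary}-type with $s>\frac{n+5}2$, the shell count $R/\pi$ on $\cA$, and the positive-probability statements via the support of the Gaussian measure, using $f_0\equiv1$ and, for $n\geq3$, an $f_0$ with a regular nonempty zero set) match the paper and are fine in outline.
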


\begin{remark}
In plain words, this theorem says that, when the variances satisfy
the convergence condition~(\ref{convsil}), the asymptotic
distribution is completely different from that of the Nazarov--Sodin
regime: the number of nodal components diffeomorphic to a sphere that
are contained in a large
ball grows like the radius with probability~$p_n$ and stays uniformly
bounded with probability~$1-p_n$. The number of non-spherical nodal
components stays uniformly bounded almost surely.
One can also study the nesting graph of the nodal structure, see~\cite{SW19,BMW19} for a definition. In the setting of Theorem~\ref{T.2}, with probability $p_n$, the nesting graph is a tree with degree 2 internal vertices, and the number of other trees is bounded almost surely.
\end{remark}

\begin{remark}\label{rem:isotropy}
Arguing as in Lemma~\ref{L.buy} using~\eqref{RandomU}, it is easy to show that the covariance kernel of our random field is
$$\bE\left(u(x)u(y) \right)=(2\pi)^n\sum_{\ell=0}^\infty \sigma_l^2\frac{|d_\ell|}{|\SS|}P_{\ell n}\left(\frac{x}{|x|}\cdot \frac{y}{|y|}\right)\frac{J_{\ell + \frac{n}{2}-1}(|x|)}{|x|^{\frac{n}{2}-1}}\frac{J_{\ell + \frac{n}{2}-1}(|y|)}{|y|^{\frac{n}{2}-1}}\,,$$
where $P_{\ell n}$ is the Legendre polynomial of degree $\ell$ in $n$
dimensions. Therefore, our random field is isotropic (i.e., invariant
under rotations) but not translation-invariant, in general. In the case when
$\sigma_{\ell}=1$ for all~$\ell$, one does have translational
invariance; indeed, a straightforward
computation~\cite{CS19} shows that the covariance kernel reduces to 
$$
\frac{J_{\frac{n}{2}-1}(|x-y|)}{|x-y|^{\frac{n}{2}-1}}
$$
up to a multiplicative constant.
\end{remark}

To offer some perspective into the ideas behind Theorem~\ref{T.2}, it
is convenient to start by recalling the gist of the proof of the first part of Theorem~\ref{T.1}. Nazarov
and Sodin start off with a clever (non-probabilistic) ``sandwich estimate'' of the form
\[
\bigg(1-\frac rR\bigg)^n \Bint_{B_{R-r}}\frac{N_{\tau_x
    u}(r)}{r^n}\, dx\leq  \frac{N_u(R)}{R^n}\leq \bigg(1+\frac rR\bigg)^n \Bint_{B_{R+r}}\frac{N_{\tau_x
    u}(r)+ \mathfrak N_{\tau_x
    u}(r)}{r^n}\, dx\,,
\]
where $\tau_xu(y):= u(x+y)$ is a translation of~$u$ and
$\mathfrak N_u(r)$ denotes the number of critical points of the
restriction $u|_{\pd B_r}$. Now one can exploit the fact that, in the
particular case when $a_{lm}$ are independent standard Gaussians, $u$
is a Gaussian random function with translation-invariant distribution,
which is the setting that the Nazarov--Sodin theory applies to.
Moreover, its spectral measure (which is simply~$dS$, the normalized
area measure on~$\SS$) has no atoms. Therefore, a theorem of Grenander,
Fomin and Maruyama and the Kac--Rice bound respectively imply that the
action of shifts on~$u$ is ergodic and that the expected value of~$\mathfrak N_u(r)$ is of order $r^{n-1}$. By taking limits
$1\ll r\ll R$, this readily implies the existence of
$\lim_{R\to\infty} N_u(R)/R^n$. The fact that this limit is positive
then follows from the sandwich estimate and the existence of a
(non-random) solution with a structurally stable compact nodal
set. Let us stress that the whole theory hinges on the fact that
$a_{lm}$ are Gaussians of the same variance, as this is crucially
employed both to connect the problem with the theory of Gaussian
random functions and to show that one can compute limits using ergodic
theory. The second item in Theorem~\ref{T.1} uses that, in fact, one can prescribe the
topology of a robust nodal component~\cite{EPS13}.

It should then come as no surprise that the proof of Theorem~\ref{T.2}
is based on entirely different principles. The basic idea is that,
with probability~1, in the setting of Theorem~\ref{T.2} the
density~$f$ is an $H^s(\SS)$-smooth function (and, as $s>\frac{n+5}{2}$, of class $C^3$ by the Sobolev embedding theorem) with nondegenerate zeros, and
that the probability~$p_n$ that~$f$ does not vanish is strictly
positive. When~$f\in H^s(\SS)$ does not vanish, it is not hard to prove using
asymptotic expansions that the number of nodal components contained in
a large ball~$B_R$ grows as the radius, and that all but a uniformly
bounded number of them are diffeomorphic to a sphere. When the zero
set of~$f$ is regular and nonempty, one can show that the number of nodal components
on~$\RR^n$ is bounded. However, the analysis is considerably
subtler because it hinges on the stability of certain noncompact
components of the nodal set that locally look like a helicoid. Putting these facts together, one heuristically arrives at
Theorem~\ref{T.2}.

It is worth mentioning that the regularity effect that we have striven to
capture is completely different from the use of frequency-dependent
weights considered by Rivera in the context of random Gaussian fields on compact
manifolds~\cite{Rivera19} (see also~\cite{FLL} for frequency-dependent
weights in the context of
random algebraic hypersurfaces). Indeed, Rivera's central result is that the
Nazarov--Sodin asymptotics still holds, with different constants, for
series of the form
\[
F(x):= \sum_{k=1}^L \lambda_k^{-s} \, a_k \, e_k(x)\,,
\]
where $L\gg1$, $(e_k,\lambda_k)$ are the eigenfunctions and eigenvalues of the
Laplacian on a compact $n$-manifold, $s\in (0,\frac n2)$ and
$a_l\sim \cN(0,1)$. In contrast, we
are interested in regimes with a different asymptotic behavior that
correspond to a scattering situation on~$\RR^n$.

It is natural to wonder which kind of asymptotic laws may arise from
more general randomizations of the function~$f$. As a first step in
this direction, we state next a ``stability result'',
that is, sufficient conditions for the asymptotics of
Theorems~\ref{T.1} and~\ref{T.2} to hold for more general probability
measures on the space of functions~$f$ (or~$u$). These conditions are
by no means obvious a priori, but the proof is based on an elementary
idea: if two probability measures~$\mu$ and~$\tmu$ (on the space of functions on
the sphere, which one can identify with a space of
sequences~$\RR^\mathbb{N}$) are equivalent (i.e., mutually absolutely
continuous), then these measures have the same zero-probability
events. The aforementioned sufficient conditions are then derived by
imposing that one of these measures correspond to the
Nazarov--Sodin distribution or to the distributions considered in
Theorem~\ref{T.2}.

\begin{theorem}\label{T.3}
Suppose that there is a nonnegative integer~$l_0$ and reals $M_{lm}$ and
$\si_{lm}$ such that the random variables~$a_{lm}$ in~\eqref{random}, which we assume to be
independent, follow any probability distribution on the line (absolutely
continuous with respect to the Lebesgue measure) for~$l<l_0$ and
Gaussian distributions $\cN(M_{lm},\si_{lm}^2)$ for $l\geq l_0$. Then:
\begin{enumerate}
\item The results of Theorem~\ref{T.1} hold, with the same constant~$\nu$,
  if
  \[
\sum_{\ell=l_0}^\infty \sum_{m=0}^{d_l}\Bigg[\frac{M_{lm} ^2}{
  \sigma_{\ell m} ^2+1 } + \frac{(\sigma_{lm}-1) ^2}{ \sigma_{lm} }\Bigg]<\infty\,.
    \]
 \item The results of Theorem~\ref{T.2} hold
  if there are constants $\si_l$ satisfying~(\ref{convsil}) such that
  \[
\sum_{\ell=l_0}^\infty \sum_{m=0}^{d_l}\bigg[\frac{M_{lm}^2}{
  \left(\sigma_\ell ^2+\sigma_{lm} ^2\right)} + \frac{(\sigma_{\ell} -\sigma_{lm}) ^2}{\sigma_{\ell} \sigma_{lm} }\bigg]<\infty\,.
    \]
\end{enumerate}
\end{theorem}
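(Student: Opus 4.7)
Following the hint in the introduction, I compare the probability measure $\tmu$ on the sequence space $\RR^{\mathbb{N}}$ associated to the hypothesis of Theorem~\ref{T.3} with a reference measure $\mu_0$: in part~(i), $\mu_0$ is the product of standard Gaussians $\cN(0,1)$ (so that Theorem~\ref{T.1} applies), and in part~(ii) the product of Gaussians $\cN(0,\sigma_\ell^2)$ (so that Theorem~\ref{T.2} applies). Since the coefficient-to-solution map $(a_{lm})\mapsto u$ defined by~\eqref{random2} is measurable, each almost-sure event from Theorem~\ref{T.1} or~\ref{T.2} pulls back to a measurable $A\subset\RR^{\mathbb{N}}$, and showing $\tmu\ll\mu_0$ is enough to transfer them: $\mu_0(A)=1$ implies $\tmu(A)=1$, and any $\mu_0$-almost-sure partition $A=A_1\sqcup A_2$ is transferred to a $\tmu$-almost-sure partition with some probabilities $p',1-p'$ (not necessarily equal to the original).

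\textbf{Splitting off the finite part.} Factor each measure as $\mu=\mu^{\mathrm{fin}}\otimes\mu^{\mathrm{tail}}$ according to the coordinates with $l<l_0$ and $l\geq l_0$. The finite-dimensional factor $\mu_0^{\mathrm{fin}}$ is a Gaussian density on $\RR^{\sum_{l<l_0}d_l}$ which is strictly positive, hence equivalent to Lebesgue measure, while $\tmu^{\mathrm{fin}}$ is absolutely continuous with respect to Lebesgue by hypothesis. Thus $\tmu^{\mathrm{fin}}\ll\mu_0^{\mathrm{fin}}$ automatically, irrespective of the particular distribution chosen for the coefficients with $l<l_0$, and the work reduces to proving $\tmu^{\mathrm{tail}}\ll\mu_0^{\mathrm{tail}}$.

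\textbf{Kakutani's dichotomy for the tail.} Both tail factors are countable products of non-degenerate Gaussian measures on the line, so Kakutani's classical dichotomy theorem applies: the product measures are either mutually equivalent or mutually singular, and equivalence is characterized by $\sum_{l,m}(1-\rho_{lm})<\infty$, where $\rho_{lm}$ is the Hellinger affinity of the two $(l,m)$-th factors. A direct Gaussian integration yields
\[
\rho\bigl(\cN(m_1,\tau_1^2),\,\cN(m_2,\tau_2^2)\bigr)=\sqrt{\tfrac{2\tau_1\tau_2}{\tau_1^2+\tau_2^2}}\,\exp\!\Bigl(-\tfrac{(m_1-m_2)^2}{4(\tau_1^2+\tau_2^2)}\Bigr),
\]
and using $1-\rho\asymp-\log\rho$ as $\rho\to 1$ together with $\log(1+x)\asymp x$ as $x\to 0^+$, the Kakutani criterion reduces to
\[
\sum_{l\geq l_0}\sum_{m}\left[\frac{(\tau_1-\tau_2)^2}{\tau_1\tau_2}+\frac{(m_1-m_2)^2}{\tau_1^2+\tau_2^2}\right]<\infty.
\]
Specializing $(\tau_1,m_1)=(\sigma_{lm},M_{lm})$ and $(\tau_2,m_2)=(1,0)$ yields the hypothesis of part~(i); specializing instead $(\tau_2,m_2)=(\sigma_\ell,0)$ yields that of part~(ii).

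\textbf{Main obstacle.} The argument is essentially formal once Kakutani's theorem is invoked. The only technical steps are the sharp Gaussian Hellinger computation and the elementary asymptotics $1-\rho\asymp-\log\rho$, both routine. Conceptually, the key observation is that one-sided absolute continuity $\tmu\ll\mu_0$ already suffices to transfer the almost-sure events of Theorems~\ref{T.1} and~\ref{T.2}; this is precisely what allows an arbitrary absolutely continuous distribution for the finitely many coefficients with $l<l_0$ without any matching-support condition between $\tmu$ and $\mu_0$ on that finite-dimensional factor.
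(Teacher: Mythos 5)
Your reduction and the Kakutani/Hellinger computation are exactly the ones in the paper's proof (same factor-by-factor affinity, same asymptotics $1-\rho\asymp-\log\rho$, same resulting summability conditions), and they do give \emph{equivalence} of the tail product measures. Part (i) is unproblematic: the conclusions of Theorem~\ref{T.1} are almost-sure statements, so one-sided absolute continuity $\tmu\ll\mu_0$ indeed transfers them with the same constant~$\nu$, and your observation that the low-degree block only needs $\tmu^{\mathrm{fin}}\ll\mu_0^{\mathrm{fin}}$ is a fair point.

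The gap is in part (ii). The ``results of Theorem~\ref{T.2}'' are not purely almost-sure statements: for $n\geq3$ they assert that the linear-growth alternative and the bounded alternative \emph{each} occur with probability strictly between $0$ and $1$ (i.e.\ $p_n\in(0,1)$), and likewise for the statements about $N_u(R;[\Si])$ with $[\Si]=[\SS]$. One-sided absolute continuity $\tmu\ll\mu_0$ transfers null sets of $\mu_0$ to null sets of $\tmu$, hence the a.s.\ dichotomy, but it does \emph{not} transfer positivity of probabilities: a priori your $p'$ could equal $0$ or $1$ (for instance, $\tmu$ could concentrate on configurations where $f$ never vanishes), in which case the conclusion of Theorem~\ref{T.2} for $n\geq 3$ would fail. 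Your parenthetical ``not necessarily equal to the original'' acknowledges that the value changes but does not address that strict positivity of both probabilities is part of what must be proved. This is precisely why the paper works with \emph{mutual} absolute continuity: under equivalence, events of positive $\bPa$-probability have positive $\tmu$-probability, so $p_n\in(0,1)$ carries over. To close the gap you must either upgrade your one-sided statement to equivalence (Kakutani already gives it on the tail; on the finite block this requires the low-mode densities to be a.e.\ positive, or some substitute), or else supplement the transfer with a conditional version of Lemma~\ref{PropPosProb}: conditioning on the low modes, the conditional law of the tail is a nondegenerate product Gaussian equivalent to the reference one, and one must check that for the relevant low-degree parts both the event that $f$ has no zeros and the event that $f$ has a nonempty regular zero set have positive probability under perturbations of the high modes alone. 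As written, neither step appears in your proposal, so the $p_n\in(0,1)$ portion of part (ii) is not established.
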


\section{The Fourier transform of measures on the sphere with an $H^s$-smooth
  density}
\label{S.FT}

Our goal in this section is to obtain sharp asymptotic expansions for
the Fourier transform
\[
u:=\widehat{f\, dS}
\]
of measures of the form $f\, dS$,
where for the time being we can think of the integrable function $f:\SS\to\CC$ simply as a series of spherical harmonics:
\[
f(\xi):= \sum_{l=0}^\infty\sum_{m=1}^{d_l} f_{\ell m}\,Y_{lm}(\xi)\,.
\]
It is well known that, for any real~$s$, the $H^s(\SS)$~norm
of~$f$ can then be computed as
\[
\|f\|_{H^s(\SS)}^2=\sum_{l=0}^\infty\sum_{m=1}^{d_l} (1+l)^{2s}|f_{\ell m}|^2\,.
\]
We want~$u$ to be real-valued, so we impose that
\[
f_{lm}:= i^l a_{lm}
\]
with $a_{lm}\in\RR$. The real and imaginary parts of $f$ are then
respectively given by the terms where $l$ is even and odd:
\begin{align*}
f\R(\xi) &:= \sum_{l\text{ even}}\sum_{m=1}^{d_l} (-1)^{\frac l2}a_{\ell
  m}\,Y_{lm}(\xi)\,,\\
  f\I(\xi) &:= \sum_{l\text{ odd}}\sum_{m=1}^{d_l} (-1)^{\frac {l-1}2}a_{\ell m}\,Y_{lm}(\xi)\,.
\end{align*}

To analyze $u$, we shall start by recalling the explicit formula for the Fourier
transform of a spherical harmonic, which we borrow
from~\cite{CS19}. For the benefit of the reader, we include a short
proof that only employs classical formulas for special functions,
instead of the theory of point pair invariants and zonal
spherical functions. For the ease of notation, here and in what
follows we set
\[
\La:= \frac n2-1\,.
\]
Also, throughout we will often denote the radial and angular parts of~$x$ by
\[
r:=|x|\in\RR^+\,,\qquad \te:=\frac x{|x|}\in\SS\,.
\]

\begin{proposition}\label{P.sh}
  The Fourier transform of the measure $Y_{lm}\, dS$ is
\begin{equation}\label{FouSH}
\widehat{Y_{\ell m}\, dS}(x)=(2\pi)^{\frac{n}{2}}\,(-i)^\ell\, Y_{\ell
  m} \left(\frac x{|x|}\right) \frac{J_{\ell + \La}(|x|)}{|x|^{\La}}\,,
\end{equation}
where $J_\alpha$ is the Bessel function of the first kind.
\end{proposition}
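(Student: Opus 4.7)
My plan is to exploit rotational covariance to reduce the Fourier transform to a one-dimensional integral and then to apply two classical special-function identities: the Funk--Hecke formula and Gegenbauer's integral representation of the Bessel function.

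First, I would establish that $\widehat{Y_{\ell m}\, dS}(x)= Y_{\ell m}(x/|x|)\,F_\ell(|x|)$ for some radial function $F_\ell$ depending only on~$\ell$. To see this, note that for any rotation $R\in SO(n)$, the change of variables $\eta = R^{-1}\xi$ in the defining integral, combined with the identity $Y_{\ell m}(R\eta)=\sum_{m'}D^\ell_{mm'}(R)\,Y_{\ell m'}(\eta)$ expressing the action of $R$ on degree-$\ell$ spherical harmonics, yields
\[
\widehat{Y_{\ell m}\, dS}(Rx)=\sum_{m'}D^\ell_{mm'}(R)\,\widehat{Y_{\ell m'}\, dS}(x)\,.
\]
Expanding $\widehat{Y_{\ell m}\, dS}$ itself as a spherical harmonic series in its angular variable and invoking Schur's lemma then forces all components outside the $(\ell,m)$ slot to vanish and renders the remaining radial profile $F_\ell$ independent of~$m$.

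Next, to compute $F_\ell(r)$ I would evaluate at $x=r\xi_0$ with $\xi_0\in\SS$ chosen so that $Y_{\ell m}(\xi_0)\neq 0$:
\[
Y_{\ell m}(\xi_0)\,F_\ell(r)=\int_{\SS}e^{-ir\,\xi\cdot\xi_0}\,Y_{\ell m}(\xi)\,dS(\xi)\,.
\]
By the Funk--Hecke formula, the right-hand side equals $Y_{\ell m}(\xi_0)$ multiplied by
\[
\frac{|\mathbb{S}^{n-2}|}{C_\ell^{\La}(1)}\int_{-1}^{1} e^{-irt}\,C_\ell^{\La}(t)\,(1-t^2)^{\La-\frac12}\,dt\,,
\]
where $C_\ell^{\La}$ is the Gegenbauer polynomial of index~$\La$. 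The last integral is the classical Gegenbauer integral representation of the Bessel function, which equals $(-i)^\ell\,\pi\,2^{1-\La}\,\Gamma(\ell+2\La)\,J_{\ell+\La}(r)/(\ell!\,\Gamma(\La)\,r^\La)$; this immediately produces the factors $(-i)^\ell$ and $J_{\ell+\La}(r)/r^\La$ appearing in~\eqref{FouSH}.

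The remaining step is to check that once one substitutes the explicit values $C_\ell^{\La}(1)=\Gamma(\ell+2\La)/(\ell!\,\Gamma(2\La))$ and $|\mathbb{S}^{n-2}|=2\pi^{\La+1/2}/\Gamma(\La+1/2)$ in the Funk--Hecke prefactor and then applies the Legendre duplication formula to $\Gamma(2\La)$, the entire numerical constant simplifies to exactly $(2\pi)^{n/2}$. The main obstacle in the argument is really just this final piece of constant bookkeeping; everything conceptually nontrivial is already packaged into the two classical identities above.
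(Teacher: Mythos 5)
Your argument is correct and essentially the paper's: both reduce via the Funk--Hecke formula to a one-dimensional integral and then invoke a classical Bessel identity, the only differences being that you use Gegenbauer's integral representation in one stroke (and your constant does indeed simplify to $(2\pi)^{n/2}$ via the duplication formula) where the paper passes through a Rodrigues-type reduction followed by Poisson's representation, and that your preliminary rotation/Schur-lemma step is redundant, since Funk--Hecke already yields the radial factorization. One small caveat: the Gegenbauer form degenerates when $n=2$ (then $\La=0$, $C_\ell^{0}\equiv 0$ for $\ell\geq 1$ and $\Gamma(\La)$ diverges), so that case needs the Chebyshev/limiting version of the identity, which the paper's normalization in terms of the $n$-dimensional Legendre polynomials $P_{\ell n}$ handles uniformly.
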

\begin{proof}
By the Funk–Hecke formula~\cite[Theorem 2.22]{AH12}, we have
\begin{equation}
\widehat{Y_{\ell m}\, dS}(x)=c_{\ell}(r)Y_{\ell m}(\te)\,,
\end{equation}
where
\begin{equation}
c_{\ell}(r)
= |\mathbb S^{n-2}|\int_{-1}^1 e^{-itr} \, P_{\ell n}(t) (1-t^2)^{\frac{n-3}{2}} \, dt\,.
\end{equation}
Here $P_{\ell n}$ is the Legendre polynomial. In turn, this last integral can be calculated using the
formula~\cite[Proposition 2.26]{AH12}:
\[
\int_{-1}^1 e^{-itr}\, P_{ln}(t)\, (1-t^2)^{\frac{n-3}2}\, dt =
\frac{(-ir)^l \, \Ga(\frac{n-1}2)}{2^l \Ga(l+\frac{n-1}2)} \int_{-1}^1
e^{-itr} (1-t^2)^{l+\La-\frac{1}2}\, dt\,.
\]
The proposition then follows in view of the well-known integral representation of the Bessel function,
\begin{equation}
J_{\al}\left(z\right)=\frac{(\tfrac{z}{2})^{\al}}{\pi^{\frac{1}{2}}%
\Gamma\left(\al+\tfrac{1}{2}\right)}\int_{-1}^{1} e^{-itz}\,(1-t^{2})^{\al-\frac{1}{2}}%
\, {d}t,
\end{equation}
\end{proof}

While the obtention of an asymptotic expansion for the Fourier
transform of the measure $f\, dS$ hinges on the analysis of
oscillatory integrals, it is convenient to employ the structure of the
problem to obtain sharper results. This will be done by exploiting the
expansion in spherical harmonics and then using asymptotics with
uniform constants directly for Bessel functions. It is worth pointing
out that, by blindly following the general approach to
asymptotic expansions (e.g., \cite[Theorem~7.7.14]{Hor15}), one would
need~$f \in H^s(\SS)$ with $s>\frac{3n+1}{2}$ (without considering derivatives), while the approach we take here
will lower this number to $s>\frac{n+5}2$.

Let us denote by
\[
\pd_r u:= \frac x{|x|}\cdot \nabla u\,,\qquad \snabla u:= \nabla u- \frac{x\cdot \nabla u}{|x|^2}x
\]
the radial and angular parts of the gradient. The covariant derivative
on the unit sphere will be denoted by~$\nablaS$.

\begin{proposition}\label{P.stationary}
If $f\in H^s(\SS)$ with $s>\frac{n+5}2$, then
\begin{align*}
  u&=
  \frac{2(2\pi)^{\frac{n-1}{2}}}{r^{\frac{n-1}{2}}}\big[f\R(\te)\,\cos(r-r_0)+
        f\I(\te)\, \sin(r-r_0)+ \cE_1(r)\big]\,,\\
  \pd_r u&=
  \frac{2(2\pi)^{\frac{n-1}{2}}}{r^{\frac{n-1}{2}}}\big[ -f\R(\te)\,\sin(r-r_0)+
              f\I(\te)\, \cos(r-r_0) +\cE_2(r)\big]\,,\\
  \snabla u &=
              \frac{2(2\pi)^{\frac{n-1}{2}}}{r^{\frac{n+1}{2}}}\big[\nablaS
              f\R(\te)\,\cos(r-r_0)+
        \nablaS f\I(\te)\, \sin(r-r_0)+ \cE_3(r)\big]\,,
\end{align*}
where $r_0:= \frac{\pi}{4} (n-1)$ and
the errors are bounded as
\[
  |\cE_1(x)|+ |\nabla\cE_1(x)|+  |\cE_2(x)|
  + |\cE_3(x)|\leq \frac{C\|f\|_{H^{s}(\SS)}}{r}\,.
\]
\end{proposition}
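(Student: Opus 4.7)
The plan is to expand $u$ as a Bessel--spherical harmonic series via Proposition~\ref{P.sh} and then substitute a uniform one-term asymptotic for Bessel functions. From that proposition,
\[
u(x)=(2\pi)^{\frac{n}{2}}\sum_{l,m} a_{lm}\, Y_{lm}(\te)\,\frac{J_{l+\La}(r)}{r^{\La}}\,,
\]
and the main tool is the classical uniform asymptotic
\[
J_\al(r)=\sqrt{\tfrac{2}{\pi r}}\,\cos\!\bigl(r-\tfrac{\al\pi}{2}-\tfrac{\pi}{4}\bigr)+R_\al(r)\,,\qquad |R_\al(r)|+|R_\al'(r)|\leq \frac{C(1+\al)^2}{r^{3/2}}\qquad(r\geq 1),
\]
together with the companion one-term expansion for $J_\al'$, which follows from the Bessel recurrence. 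The regime $r\leq 1$ requires no separate analysis because the inequality $|\cE_j|\leq C\|f\|_{H^s(\SS)}/r$ becomes trivial there (both sides are bounded in terms of $\|f\|_{H^s(\SS)}$ via Sobolev embedding).

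A short computation using $\La=\tfrac{n}{2}-1$ gives $\tfrac{(l+\La)\pi}{2}+\tfrac{\pi}{4}=\tfrac{l\pi}{2}+r_0$, so
\[
\cos\!\bigl(r-\tfrac{l\pi}{2}-r_0\bigr)=\begin{cases}(-1)^{l/2}\cos(r-r_0), & l \text{ even},\\[1mm] (-1)^{(l-1)/2}\sin(r-r_0), & l \text{ odd.}\end{cases}
\]
Splitting the sum by the parity of $l$ and recognising the defining series of $f\R$ and $f\I$, the leading contribution in $u$ becomes $\tfrac{2(2\pi)^{(n-1)/2}}{r^{(n-1)/2}}\bigl[f\R(\te)\cos(r-r_0)+f\I(\te)\sin(r-r_0)\bigr]$, since $(2\pi)^{n/2}\sqrt{2/\pi}=2(2\pi)^{(n-1)/2}$. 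The leading terms of $\pd_r u$ and $\snabla u$ follow in exactly the same way, using the $J_\al'$ asymptotic in the first case and the identity $\snabla=r^{-1}\nablaS$ applied term by term in the second.

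The error bounds come from Cauchy--Schwarz. For $\cE_1$, the remainder in $u$ satisfies
\[
|u-\text{leading}|\leq \frac{C}{r^{\La+3/2}}\sum_{l,m}|a_{lm}|(1+l)^2|Y_{lm}(\te)|\leq \frac{C\|f\|_{H^s(\SS)}}{r^{(n+1)/2}}\Bigl(\sum_{l}(1+l)^{4-2s}\,d_l\Bigr)^{1/2},
\]
using the addition formula $\sum_m|Y_{lm}(\te)|^2=d_l/|\SS|$; since $d_l\lesssim l^{n-2}$, the series converges for $s>\tfrac{n+1}{2}$, and multiplication by $r^{(n-1)/2}$ yields the stated bound on $\cE_1$. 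The argument for $\cE_2$ is identical. For $\cE_3$, the factor $\nablaS Y_{lm}$ now appears in place of $Y_{lm}$; by rotational invariance, $\sum_m|\nablaS Y_{lm}(\te)|^2$ is constant on $\SS$ and equals its spherical average $l(l+n-2)d_l/|\SS|\lesssim l^n$, which introduces an extra $l^2$ weight and raises the convergence threshold to $\sum_l(1+l)^{n+4-2s}<\infty$. This converges precisely for $s>\tfrac{n+5}{2}$, which is exactly the hypothesis in the statement. Finally, the bound on $|\nabla\cE_1|$ is obtained from the estimates for $\cE_2$ and $\cE_3$ by algebraic manipulation: matching the two representations of $\snabla u$ yields $\snabla\cE_1=r^{-1}\cE_3$, and matching those of $\pd_r u$ yields $\pd_r\cE_1=\cE_2-\tfrac{n-1}{2r}\bigl(f\R\cos(r-r_0)+f\I\sin(r-r_0)+\cE_1\bigr)$, both of order $r^{-1}$. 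The main obstacle of the argument is this sharp bookkeeping of the polynomial growth in~$l$: the $(1+l)^2$ from the Bessel remainder together with the additional~$l$ from $|\nablaS Y_{lm}|$ is what forces the regularity $s>\tfrac{n+5}{2}$, while a generic oscillatory-integral approach in the spirit of~\cite[Theorem~7.7.14]{Hor15} would demand the much stronger condition $s>\tfrac{3n+1}{2}$.
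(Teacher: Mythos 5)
Your argument is correct and follows essentially the same route as the paper: expand $u$ in spherical harmonics via Proposition~\ref{P.sh}, insert the uniform one-term Bessel asymptotics (and its derivative counterpart, which the paper gets from the recurrence~\eqref{rollo}), and control the remainders by Cauchy--Schwarz together with the addition theorem and its gradient analogue $\sum_m|\nablaS Y_{lm}|^2=l(l+n-2)\,d_l/|\SS|$, which is exactly where the threshold $s>\frac{n+5}{2}$ arises. Only cosmetic slips, neither affecting the bounds: the convergence condition for the $\cE_1$ series is $s>\frac{n+3}{2}$ rather than $s>\frac{n+1}{2}$ (harmless under the hypothesis), and the sign of the $\frac{n-1}{2r}$ term in your identity for $\pd_r\cE_1$ is flipped.
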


\begin{proof}
  By Proposition~\ref{P.sh}, $u$ is given by the series
  \begin{align*}
u(x) &= \sum_{l=0}^\infty\sum_{m=1}^{d_l} i ^l a_{lm} \widehat{Y_{lm}\,
    dS}(x)\\
    &= \frac{(2\pi)^{\frac n2}}{r^\La} \sum_{l=0}^\infty\sum_{m=1}^{d_l} a_{lm}
      Y_{lm}(\te)\, {J_{l+\La}(r)}\,.
  \end{align*}
  Let us now recall the following uniform bound for a Bessel
  function~\cite[Theorem 4]{Kra14}, valid for all $\al\geq0$ and $z\geq0$:
  \begin{equation}\label{jal}
J_\alpha(z) = \sqrt{\frac{2}{\pi z}} \, \cos\bigg(z-\frac{\pi \alpha}{2}-\frac{\pi}{4}\bigg) +  \bigg|\al^2-\frac14\bigg|\,\theta_\al(z)\, z^{-3/2}\,,
\end{equation}
where $|\theta_\al(z)|\leq 1$. Setting
  \begin{align*}
u_1(x)&:= \frac{2(2\pi)^{\frac{n-1}{2}}}{r^{\frac{n-1}{2}}}\sum_{l=0}^\infty\sum_{m=1}^{d_l} a_{lm}
      Y_{lm}(\te)\, \cos\bigg(r-r_0-\frac{\pi
        l}{2}\bigg)\\
    &\phantom{:}= \frac{2(2\pi)^{\frac{n-1}{2}}}{r^{\frac{n-1}{2}}}\big[f\R(\te)\,\cos(r-r_0)+
        f\I(\te)\, \sin(r-r_0)\big]\,,
  \end{align*}
  it then follows that
  \[
\cE_1(x):= \frac{r^{\frac{n-1}{2}}}{2(2\pi)^{\frac{n-1}{2}}}\,[u(x)-u_1(x)]
  \]
  can be estimated as  \begin{align*}
|\cE_1(x)| &\leq \frac  {C}{r} \sum_{l=0}^\infty\sum_{m=1}^{d_l}
             (l+1)^2 |a_{lm}| |Y_{lm}(\te)| \,.
  \end{align*}
  Using the Cauchy-Schwarz inequality,  we then infer
  \[
|\cE_1(x)| \leq \frac  C{r} \sum_{l=0}^\infty  (1+l)^2\bigg(\sum_{m=1}^{d_l}
            |Y_{lm}|^2\bigg)^{1/2} \bigg(\sum_{m=1}^{d_l}
            |a_{lm}|^2\bigg)^{1/2}\,.
  \]
  The addition theorem~\cite[Theorem 2.9]{AH12} ensures that, at any point on the sphere,
  \begin{equation}\label{addition}
\sum_{m=1}^{d_l}
            |Y_{lm}|^2= c_{ln}
          \end{equation}
          with an explicit constant bounded as $c_{nl}\leq C(l+1)^{n-2}$. This then allows us
to write
\[
|\cE_1(x)| \leq \frac  C{r} \sum_{l=0}^\infty  (1+l)^{\frac n2+1}\bigg(\sum_{m=1}^{d_l}|a_{lm}|^2\bigg)^{1/2}\,.
\]
Applying Cauchy--Schwarz again we obtain
\[
|\cE_1(x)| \leq \frac  C{r} \bigg(\sum_{l=0}^\infty (1+l)^{
  n-2s+2}\bigg)^{1/2}\bigg(
\sum_{l=0}^\infty\sum_{m=1}^{d_l}(1+l)^{2s}|a_{lm}|^2\bigg)^{1/2}\leq \frac Cr\|f\|_{H^{s}(\SS)}\,,
\]
as claimed.

Let us now compute the radial derivative of~$u$. We start by noting that
\[
\pd_r u = {(2\pi)^{\frac n2}} \sum_{l=0}^\infty\sum_{m=1}^{d_l} a_{lm}
      \pd_r\bigg[Y_{lm}(\te)\, \frac{J_{l+\La}(r)}{r^\La}\bigg]\,.
    \]
    Since
    \begin{equation}\label{rollo}
      \pd_r\bigg(\frac{J_{l+\La}(r)}{r^\La}\bigg)=
      \frac{J_{l+\La-1}(r)}{r^\La}-(l+2\La)\, \frac{J_{l+\La}(r)}{r^{\La+1}}
    \end{equation}
    and this formula depends solely on Bessel functions, one can now
    use again the uniform estimate~\eqref{jal} to derive, with the same
    reasoning as above, that the error
\[
\cE_2(x):= \frac{r^{\frac{n-1}{2}}}{2(2\pi)^{\frac{n-1}{2}}}\,[\pd_r u(x)-\pd_r u_1(x)]
\]
is bounded as
\[
  |\cE_2(x)|\leq \frac Cr\|f\|_{H^{s}(\SS)}\,.
\]

The bound for the angular part of the gradient can be estimated using
the same argument and the formula
\[
\snabla u = \frac{(2\pi)^{\frac n2}}{r^{\La+1}} \sum_{l=0}^\infty\sum_{m=1}^{d_l} a_{lm}
      \nablaS Y_{lm}(\te)\, {J_{l+\La}(r)} \,,
    \]
    the only difference being that instead of the addition
    formula~(\ref{addition}) one has to use that
    \[
\sum_{m=1}^{d_l} |\nablaS Y_{lm}|^2= l(l+n-2)\,c_{nl}\,.
    \]
    To prove this, it is enough to note that, by~\eqref{addition},
    \[
0=\DeS c_{ln}= \DeS \sum_{m=1}^{d_l}  Y_{lm}^2 = 2\sum_{m=1}^{d_l}
Y_{lm}\, \DeS Y_{lm}  + 2\sum_{m=1}^{d_l}
|\nablaS Y_{lm}|^2
\]
and use the eigenvalue equation $\DeS Y_{lm}= -l(l+n-2) Y_{lm}$. Using
now that
\[
  \nabla \cE_1=\cE_2\,\frac x r+\cE_3\,,
\]
the estimate for $\nabla\cE_1$ follows from the previous bounds.
    The proposition is then proved.
\end{proof}

\section{Nodal sets of non-random monochromatic waves}
\label{S.nonrandom}

We recall that the nodal set of a function~$F: M\to \RR^m$, where
$M$ is a manifold, is {\em regular}\/
if the derivative $(D F)_x: T_xM\to \RR^m$ has maximal rank
for all~$x\in F^{-1}(0)$. We say that a codimension one compact
submanifold $\cS$ of $\RR^n$ is a {\em graph over the sphere}\/ of radius $R$
centered at the origin if it can be written in spherical coordinates
$(r,\te)\in (0,\infty)\times\SS$ as
\[
\cS=\big\{(r,\te): r= R+ G(\te)\,,\; \te\in\SS\big\}
\]
for some smooth function $G:\SS\to (-R,\infty)$. In particular, $\cS$ is diffeomorphic to $\SS$.

\begin{theorem}\label{T.nodal}
Let $f\in H^s(\SS)$ with $s>\frac{n+5}2$ and denote by~$u$ the Fourier
transform of $f\, dS$. Then:
\begin{enumerate}
\item Suppose that $f$ does not vanish on~$\SS$. Then the
  nodal set $u^{-1}(0)$ has countably many connected components
  \[
u^{-1}(0)= \bigcup_{k=1}^\infty \cS_k\,,
  \]
 and for all large enough~$k$, $\cS_k$ is a graph over a sphere centered at the origin and is contained in the annulus $k\pi-c <|x|< k\pi+c$ for some
 constant~$c$ depending on~$f$. Furthermore,
 \[
\lim_{R\to\infty} \frac{\#\{k: \cS_k\subset B_R\}}R = \frac1\pi\,.
 \]

  \item Suppose that the zero set $f^{-1}(0)$ is a nonempty regular
    subset of the sphere. Then there is a large enough~$R$ such that
    $u^{-1}(0)\backslash B_R $ is connected.
\end{enumerate}
\end{theorem}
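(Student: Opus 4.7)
The plan is to apply Proposition~\ref{P.stationary} to reduce the problem to analyzing the leading-order oscillating function
\[
u_1(x) := \frac{2(2\pi)^{(n-1)/2}}{r^{(n-1)/2}}\bigl[f\R(\theta)\cos(r-r_0) + f\I(\theta)\sin(r-r_0)\bigr],
\]
and then to use the error bounds of that proposition to transfer the conclusions from $u_1$ to~$u$.

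For part~(i), the first observation is that, when $f$ does not vanish on~$\SS$, the argument $\phi(\theta) := \arg f(\theta)$ admits a single-valued continuous lift $\phi\colon\SS\to\RR$. Indeed, the parity $Y_{lm}(-\xi) = (-1)^l Y_{lm}(\xi)$ combined with $f_{lm} = i^l a_{lm}$ gives $f(-\xi) = \overline{f(\xi)}$, which forces the degree of $f\colon\SS\to\CC^*$ to vanish (for $n=2$ the identity $f\circ A = \bar f$ with~$A$ the antipodal map makes the degree its own negative, and for $n\geq3$ the target already has trivial $\pi_{n-1}$). Thus $u_1 = \frac{2(2\pi)^{(n-1)/2}}{r^{(n-1)/2}} |f(\theta)|\cos(r - r_0 - \phi(\theta))$, whose zero set is the disjoint union of the smooth graphs $r = r_k(\theta) := r_0 + \phi(\theta) + (2k+1)\pi/2$, $k\in\ZZ$. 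On each such graph $|\partial_r u_1| = |f(\theta)|\,\frac{2(2\pi)^{(n-1)/2}}{r^{(n-1)/2}}$ dominates the radial error $|\partial_r(u-u_1)| = O(r^{-(n+1)/2})$ coming from~$\cE_2$, so a quantitative implicit function theorem produces, for every large~$k$, a unique nodal component $\cS_k$ of~$u$ given by a graph $r = r_k(\theta) + O(1/k)$. Elsewhere $|u_1|\geq c r^{-(n-1)/2}$ swallows the error and $u$ cannot vanish. The containment $\cS_k \subset \{k\pi - c < |x| < k\pi + c\}$ and the density $R/\pi$ are then immediate counting consequences.

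For part~(ii) I would first describe the nodal set of~$u_1$ itself. Away from $N := f^{-1}(0)$ the sheets $r = r_k(\theta)$ are still locally well defined, but the regularity of $f^{-1}(0)$ makes $f\colon\SS\to\CC$ transverse to zero on~$N$, so $\phi$ acquires $\pm 2\pi$-monodromy around every small meridian of~$N$; consequently $r_k$ transports into $r_{k\pm 2}$ across such a meridian. Taking adapted local coordinates $(z,\eta)\in\CC\times N$ in which $f\approx A(\eta)z$ for some $\RR$-linear isomorphism $A(\eta)$, the leading-order expression reduces (up to the prefactor) to $\mathrm{Re}\bigl[A(\eta)z\,e^{-i(r-r_0)}\bigr]$, whose zero set in $(z,r)$ is the classical helicoid with axis~$\{z=0\}$. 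Consequently $u_1^{-1}(0)\cap\{|x|>R\}$ is a single smooth connected hypersurface: the sheets $\cS_k$ are threaded together through the helicoidal collar of $N\times(R,\infty)$.

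The main step, and the principal obstacle, is to transfer this connectedness from~$u_1$ to~$u$. I would use the straight-line homotopy $v_t := (1-t)u_1 + t u$, $t\in[0,1]$, and show that for $R$ large enough $|\nabla v_t| > 0$ on $v_t^{-1}(0)\cap\{|x|>R\}$ uniformly in~$t$. This is delicate because near the axis $N\times\RR^+$ the leading radial derivative $\partial_r u_1$ and the radial error $\partial_r(u-u_1)$ vanish to the same pointwise order, so one cannot apply a single off-the-shelf implicit function theorem; instead one must use the radial derivative in one region and the Euclidean angular derivative in another. On $\{|f(\theta)|\geq c'\}$, $|\partial_r u_1|\geq c r^{-(n-1)/2}$ on the nodal set while $|\partial_r(u-u_1)| = O(r^{-(n+1)/2})$ via~$\cE_2$. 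On $\{|f(\theta)|<c'\}$ near~$N$, regularity forces $\nablaS f\R$ and $\nablaS f\I$ to be linearly independent on~$N$, so the quadratic form $|\nablaS f\R\cos(r-r_0)+\nablaS f\I\sin(r-r_0)|^2$ is bounded below uniformly in~$r$; combined with the expansion of~$\snabla u$ this yields $|\snabla u_1|\geq c r^{-(n+1)/2}$, while the bound on~$\cE_3$ gives the strictly better $|\snabla(u-u_1)| = O(r^{-(n+3)/2})$. Choosing~$R$ so that both lower bounds dominate their errors uniformly in $t\in[0,1]$, the family $\{v_t^{-1}(0)\cap\{|x|>R\}\}_{t\in[0,1]}$ is a smooth isotopy (realized by flowing along the vector field $-|\nabla v_t|^{-2}(\partial_t v_t)\nabla v_t$), so connectedness is preserved.
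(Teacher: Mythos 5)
Your argument follows the same route as the paper's proof: reduce everything to the leading oscillatory term supplied by Proposition~\ref{P.stationary}, describe its zero set (graphs $r=r_0+\Te(\te)+(2k+1)\pi/2$ where $f\neq0$, a helicoid over the cone on $f^{-1}(0)$ where $f$ vanishes), and transfer to $u$ by a stability/isotopy argument. Two execution differences are worth noting. First, in part (i) you justify the existence of a single-valued phase via the symmetry $f(-\xi)=\overline{f(\xi)}$ (degree zero when $n=2$, trivial obstruction when $n\geq3$); the paper simply writes $f=|f|e^{i\Te}$ without comment, so this is a genuinely useful extra detail. Second, in part (ii) the paper invokes the noncompact version of Thom's isotopy theorem from~\cite{EPS13}, after noting that in the variables $(r,\te)$ the function $U$ has a uniform gradient lower bound on $\{|U|<\ep\}$ while the error is $C^1$-small there (this is exactly what the separate bound on $\cE_3=\nablaS\cE_1$ buys); you instead build the isotopy by hand along $v_t=(1-t)u_1+tu$, splitting into the region $\{|f(\te)|\geq c'\}$, where the radial derivative dominates the $\cE_2$-error, and a collar around $f^{-1}(0)$, where the angular derivative is bounded below by the linear independence of $\nablaS f\R,\nablaS f\I$ and dominates the $\cE_3$-error. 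The two mechanisms are equivalent, and your weighted two-region estimates are correct; your version is more self-contained, while the paper's citation absorbs the bookkeeping your sketch leaves implicit (the flow moves points a bounded, not small, distance near the cone over $f^{-1}(0)$, so one must adjust radii when identifying $u^{-1}(0)\setminus B_R$ with the image of $U^{-1}(0)\setminus B_R$, exactly the boundary matching the cited theorem handles).

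One small but genuine omission: you never control the nodal components of $u$ inside the compact region where the asymptotic analysis does not apply. Countability of the components and the limit $\#\{k:\cS_k\subset B_R\}/R\to 1/\pi$ as stated require that only finitely many components fail to be of graph type; a smooth function could a priori have infinitely many (even uncountably many) components in a ball. The paper disposes of this in one line using the analyticity of $u$ (the zero set of a nonzero real-analytic function has locally finitely many connected components); adding that remark closes the gap.
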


\begin{proof}
  By Proposition~\ref{P.stationary}, $u$~can
then be written as
\begin{equation}\label{uUE}
  u=
  \frac{2(2\pi)^{\frac{n-1}{2}}}{r^{\frac{n-1}{2}}}(U+\cE_1)\,,
\end{equation}
where
\[
  U:= f\R(\te)\, \cos(r-r_0) + f\I(\te)\, \sin(r-r_0)\,,
\]
and we have the bound
\begin{equation}\label{small}
  |\cE_1|+ |\nabla \cE_1|<C/r\,.
\end{equation}
It is clear that the zero sets of~$u$ and of $U+\cE_1$ coincide, so we
shall next study the latter.

  Let us begin with the first case.
 Since $f$ does not vanish, its
  modulus and phase functions, defined as
  \[
    f(\te)=:|f(\te)|\, e^{i\Te(\te)}\,,
  \]
  are of class $H^s(\SS)$, and $U$ can be equivalently written as
  \[
    U= |f(\te)|\,
    \cos[r-r_0-\Te(\te)]\,.
    \]
As $\min_{\te\in \SS} |f(\te)|>0$, the zero set
of~$U$ is given, in polar coordinates and for certain $k_0\in\ZZ$, by
\[
U^{-1}(0)=\bigcup_{k\geq k_0}\mathcal U_k\,,
\]
where
\[
\mathcal U_k:= \big\{ (r,\te)\in \RR^+\times\SS : r= \Te(\te)+
(k+\tfrac{n+1}4)\pi\big\}\,.
\]
Obviously
\[
\lim_{R\to\infty} \frac{\#\{k: \mathcal U_k\subset B_R\}}R=\frac1\pi\,.
\]

For large $k$, the component~$\mathcal U_k$ of the zero set is nondegenerate because
\[
\min_{x\in \mathcal U_k}|\pd_r U(x)|= \min_{\te\in \SS} |f(\te)|>0\,.
\]
In view of the bound~(\ref{small}),
Thom's isotopy theorem (see
e.g.~\cite[Theorem 3.1]{EPS13}) then
ensures that, outside a certain large compact set~$K$ containing the origin, the nodal set $u^{-1}(0)$ can be written as
\[
u^{-1}(0)\backslash K=\bigcup_{k\geq k_0}\mathcal \cS_k\,,
\]
where each connected component $\cS_k$ is of the form
\[
\cS_k= \Phi_k(\mathcal U_k)\,,
\]
where $\Phi_k$ is a smooth diffeomorphism of~$\RR^n$ with
$\|\Phi_k-\id\|_{C^0(\RR^n)}<C/k$. As the number of nodal
components of~$u$ contained in~$K$ is finite because the function~$u$
is analytic, the first statement follows.

Let us now pass to the second statement. We can use again the
decomposition~(\ref{uUE}) and study the zero set of~$U$ in this
case. Since
\[
U^{-1}(0)= \{ (r,\te): f\R (\te)\, \cos(r-r_0)= - f\I(\te)\, \sin(r-r_0)\}\,,
\]
one has, on $ U^{-1}(0)$,
\[
(\pd_rU)^2= [-f\R(\te)\, \sin(r-r_0) + f\I(\te)\, \cos(r-r_0)]^2=
f\I(\te)^2+ f\R(\te)^2\,,
\]
so $\nabla U|_{U^{-1}(0)}$ can vanish at most at the points
$(r,\te)\in U^{-1}(0)$ such that $f(\te)=0$.

To show that $\nabla U$ is nonzero also at those points, it is enough
to notice that
\[
\snabla U= \frac {\nablaS f\R (\te)\, \cos(r-r_0) + \nablaS f\I(\te)\, \sin(r-r_0)}r\,,
\]
so $\snabla U\neq0$ at any point $(r,\te)$ with $f(\te)=0$ because the
set $f^{-1}(0)$ is regular (so the vectors $\nablaS f\R (\te)$ and
$\nablaS f\I (\te)$ are linearly independent). Therefore, one
concludes that the zero set of~$U$ is regular, and in fact
\[
|\nabla U|_{U^{-1}(-\ep,\ep)}\geq c>0
\]
for some small $\ep>0$ because the function~$U$ is periodic on~$r$. One
can now use an analog of Thom's isotopy theorem for noncompact
sets~\cite[Theorem 3.1]{EPS13} with the
bound~(\ref{small}) to obtain that, for any large enough~$R$, there exists a diffeomorphism
$\Phi_R$ of~$\RR^n$, with
\[
  \|\Phi_R-\id\|_{C^0(\RR^n)}<C/R\,,
\]
such that
\[
u^{-1}(0)\backslash B_R= \Phi_R[U^{-1}(0)\backslash B_R]\,.
\]

Therefore, it only remains to analyze what $U^{-1}(0)$ looks like,
outside a large ball. We claim that $U^{-1}(0)\backslash
B_R$ is a connected set. Indeed, when the point $(r,\te)\in U^{-1}(0)$ is such
that $f(\te)\neq0$, it follows from the proof of the first assertion
of the statement that $U^{-1}(0)$ is locally a graph over a sphere
centered at the origin. When $f(\te)=0$,  $U^{-1}(0)$ is locally a sort of helicoid. To
see this, we can take advantage of the fact that $f^{-1}(0)$ is a
regular set to introduce local coordinates $(y_1,\dots, y_{n-1})$ in a
neighborhood of the point~$\te$ in~$\SS$ such that $y_1:=f\R$ and
$y_2:= f\I$. Hence, defining functions $\rho(y_1,y_2)$ and
$\phi(y_1,y_2)$ as
\[
y_1+i y_2=: \rho\, e^{i\phi}\,,
\]
we readily obtain that one can write
\[
U= \rho\cos (r-r_0-\phi)
\]
locally with respect to the coordinates~$(y_1,\dots, y_{n-1})$ and for
all large enough~$r$. In this conical sector, the zero set of~$U$ then consists of the codimension~2 conical
set $\rho=0$ and the
helicoidal hypersurface
\[
r= r_0+\phi+\frac\pi 2\,.
\]
Both kinds of local description of $U^{-1}(0)$ obviously
cover the whole zero set and show that it is connected.
For the benefit of the reader, we have included Figure~\ref{FigNod2}
with an illustration of what this nodal set looks like in three dimensions.
\begin{figure}[t]
	 	\includegraphics[width=0.4\textwidth]{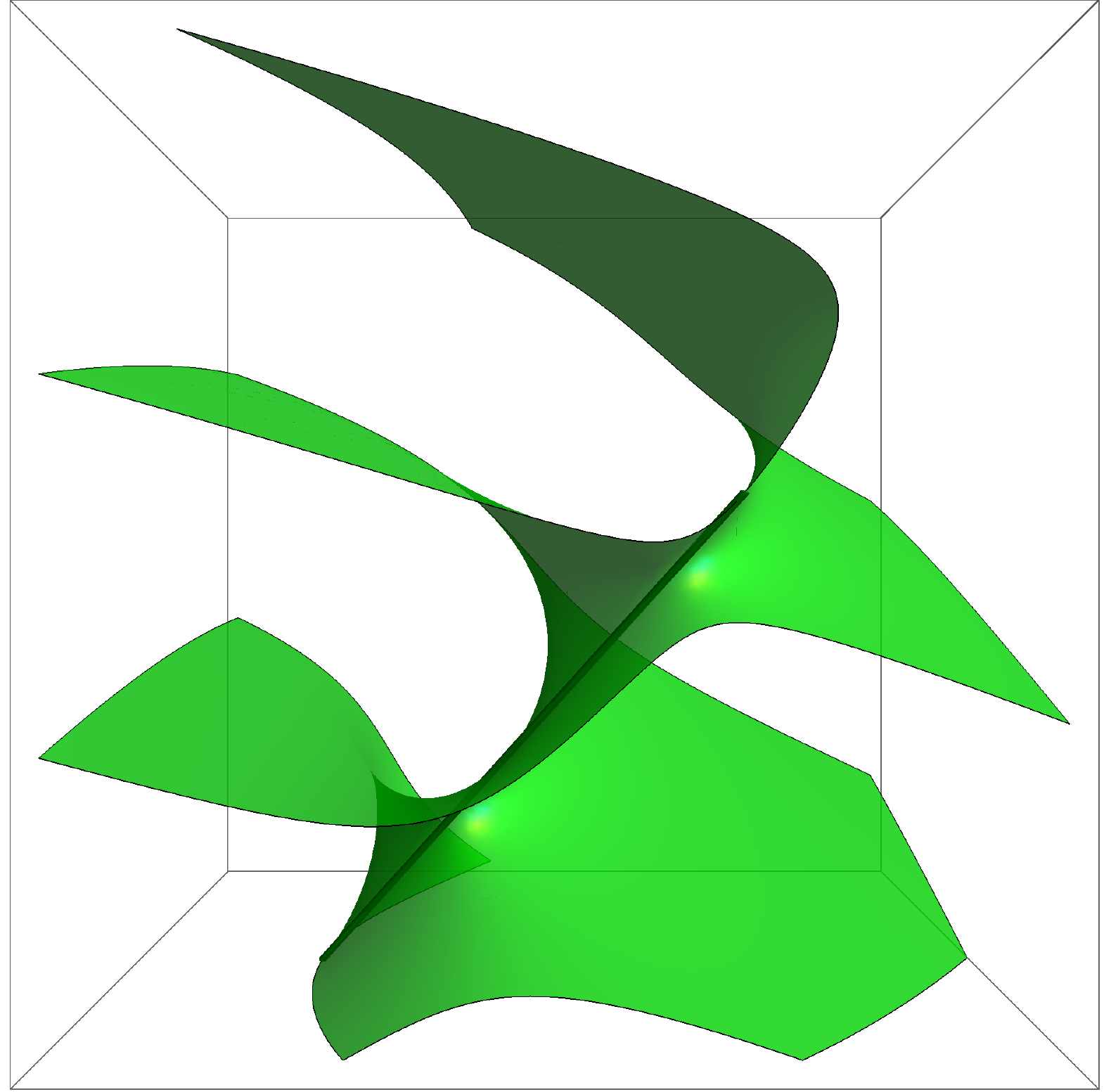}
	 \caption{Local structure of the zero set $u^{-1}(0)$ when $f$
         has regular zeros.} \label{FigNod2}
	\end{figure}
\end{proof}

\begin{remark}
If $s>\frac{n+5+2l}2$ for some integer $l\geq1$, one can conclude that
$\frac{r^{\frac{n-1}{2}}}{2(2\pi)^{\frac{n-1}{2}}} u$ is close to~$U$
in the~$C^{l+1}(\RR^n)$ norm, so~\cite[Theorem 3.1]{EPS13} then ensures that
$\|\Phi_k -\id\|_{C^l(\RR^n)}<C/k$. This immediately yields asymptotic
formulas for the area of each nodal component $\cS_k$.
\end{remark}

\section{Proof of Theorem~\ref{T.2}}
\label{S.Th2}

Let us start by introducing some notation associated with the
probabilistic setting described in~\eqref{random}. We denote by
$\bP_{lm}$ the probability
distribution of the random variable~$a_{lm}$, which we are assuming to
be a normal distribution of the form~$\cN(0,\si_l^2)$. By Kolmogorov's extension
theorem, the associated probability measure in~$\bR^\mathbb{N}$ is the product measure
that we will denote by $\bPa:=\prod_{\ell=0}^\infty \prod_{m=1}^{d_l}
\bP_{lm}$. The associated measures on the space of distributions on~$\SS$
and on~$\RR^n$ are respectively given by the pushed forward measures
$\bP_f:= f_*\bP_a$ and $\bP:= u_*\bP_a$, which we view as maps
\[
f:\omega\in \Omega\backslash \Omega_0\mapsto \sum_{\ell=0}^\infty
\sum_{m=1}^{d_l} i^l a_{\ell m}(\omega)\,Y_{lm}(\cdot)\in \mathcal D'(\SS)\,,
\]
and
\begin{equation}\label{RandomU}
u: \om\in\Omega\backslash\Omega_0 \mapsto (2\pi)^{\frac n2} \sum_{l=0}^\infty\sum_{m=1}^{d_l} a_{lm}(\om) \,
Y_{lm}\bigg(\frac \cdot{|\cdot|}\bigg) \,\frac{J_{l+\frac
    n2-1}(|\cdot|)}{|\cdot|^{\frac n2-1}}\in \mathcal D'(\RR^n)\,,
\end{equation}
where $\Omega_0\subset\Omega$ is a set of measure zero.

An important first observation is that, with the probability
distribution we are considering, $f$ is an $H^s$-smooth function with
probability~1:

\begin{lemma} \label{LemConVar}
  The function $f$ is of class~$H^s(\SS)$ almost surely.
\end{lemma}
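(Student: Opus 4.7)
The plan is a direct second moment computation combined with Fubini/Tonelli. Define the partial sum
\[
f_N(\xi) := \sum_{l=0}^N \sum_{m=1}^{d_l} i^l a_{lm} Y_{lm}(\xi),
\]
and note that, because the $Y_{lm}$ form an orthonormal basis diagonalizing the spherical Laplacian, we have the exact identity
\[
\|f_N\|_{H^s(\SS)}^2 = \sum_{l=0}^N \sum_{m=1}^{d_l} (1+l)^{2s} |a_{lm}|^2.
\]
This is a nonnegative, monotone increasing sequence in $N$, so by the monotone convergence theorem its limit (possibly $+\infty$) satisfies
\[
\mathbb{E}\bigl(\|f\|_{H^s(\SS)}^2\bigr) = \sum_{l=0}^\infty \sum_{m=1}^{d_l} (1+l)^{2s} \mathbb{E}(|a_{lm}|^2) = \sum_{l=0}^\infty d_l (1+l)^{2s} \sigma_l^2,
\]
using that $a_{lm} \sim \mathcal{N}(0,\sigma_l^2)$ has variance $\sigma_l^2$.

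Next I would invoke the standard estimate $d_l \leq C(1+l)^{n-2}$, which follows immediately from the formula $d_l = \frac{2l+n-2}{l+n-2}\binom{l+n-2}{l}$, to bound
\[
\mathbb{E}\bigl(\|f\|_{H^s(\SS)}^2\bigr) \leq C \sum_{l=0}^\infty (1+l)^{2s+n-2} \sigma_l^2 < \infty
\]
by the hypothesis~\eqref{convsil}. A random variable with finite expectation is almost surely finite, so $\|f\|_{H^s(\SS)} < \infty$ on a set of full measure. Since the tails of the series defining $f$ are orthogonal in $H^s(\SS)$, this finiteness is equivalent to the partial sums $f_N$ being Cauchy (and hence convergent) in $H^s(\SS)$ almost surely, so $f$ is genuinely realized as an element of $H^s(\SS)$ outside a null set.

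There is essentially no obstacle here: the only point that requires any care is distinguishing a priori convergence of the formal series of spherical harmonics (which we already have as a distribution by construction of~$\bP_f$) from convergence in $H^s(\SS)$, and the monotone convergence argument handles both simultaneously. The Sobolev embedding $H^s(\SS) \hookrightarrow C^3(\SS)$ for $s > \frac{n+5}{2}$ then gives, as a free corollary needed later, that $f$ is almost surely of class $C^3$, which is the regularity used in Section~\ref{S.nonrandom}.
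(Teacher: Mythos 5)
Your argument is correct and follows essentially the same route as the paper: compute the expected $H^s(\SS)$ norm squared via monotone convergence, observe that $d_l\leq C(1+l)^{n-2}$ makes it finite under the hypothesis~\eqref{convsil}, and conclude almost sure finiteness (the paper states this more tersely, but the content is identical). Your extra remark about the tails being Cauchy in $H^s(\SS)$ is a harmless elaboration of the same point.
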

\begin{proof}
The hypothesis~(\ref{variancecond}) implies that, for any~$L>0$, the expected value
of the finite sum is bounded by a uniform constant:
$$\bE\left(\sum_{\ell=0}^L \sum_{m=1}^{d_l} a_{\ell m}^2(\ell+1)^{2s}\right)=\sum_{\ell=0}^L {d_l} \sigma_\ell^2(\ell+1)^{2s}<C\,.$$
The monotone convergence theorem then ensures that
\[
\bE\big(\|f\|_{H^s(\SS)}^2\big)= \bE\bigg(\sum_{\ell=0}^\infty
\sum_{m=1}^{d_l} a_{\ell m}^2(\ell+1)^{2s}\bigg)<C\,,
\]
which implies that $\bP_f(H^s(\SS))=1$.
\end{proof}

The next result we need is that, again with probability~1, $f^{-1}(0)$
is a regular level set:

\begin{lemma}\label{L.buy}
The zero set of~$f$ is regular almost surely. Furthermore, if $n=2$,
almost surely~$f$ does not vanish.
\end{lemma}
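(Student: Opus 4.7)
The plan is to verify, via a Bulinskaya-type covering argument combined with a codimension count, that the Gaussian random field $f$ on $\SS$ avoids the relevant ``bad set'' almost surely.

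As a preliminary, Lemma~\ref{LemConVar} together with the Sobolev embedding $H^s(\SS)\hookrightarrow C^3(\SS)$ (valid since $s>(n-1)/2+3=(n+5)/2$) gives $f\in C^3(\SS)$ almost surely, so in particular $\bar f:=(f\R,f\I)$ and $\nablaS\bar f$ are continuous. Using the isotropy noted in Remark~\ref{rem:isotropy}, the addition theorem~\eqref{addition}, and its gradient version $\sum_m|\nablaS Y_{lm}|^2=l(l+n-2)c_{ln}$ (exploited in the proof of Proposition~\ref{P.stationary}), a direct computation shows that the covariance matrix $\Sigma$ of the Gaussian vector $(\bar f(\xi),\nablaS\bar f(\xi))\in\RR^2\oplus\mathrm{Mat}_{2\times(n-1)}$ is block-diagonal, independent of $\xi$, with block entries positive multiples of the convergent sums $\sum_{l\text{ even}}\sigma_l^2 d_l$, $\sum_{l\text{ odd}}\sigma_l^2 d_l$, and their counterparts weighted by $l(l+n-2)$. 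Under the implicit assumption that sufficiently many $\sigma_l$ are strictly positive, $\Sigma$ is non-degenerate, so the joint density of $(\bar f(\xi),\nablaS\bar f(\xi))$ exists and is bounded uniformly in $\xi$.

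For $n=2$ I would cover $S^1$ by $N\asymp 1/\epsilon$ arcs $I_j$ of length $\epsilon$; on the truncated event $\{\|\nabla f\|_{C^0}\leq M\}$ the mean value theorem gives
\[
\bP\bigl(\exists\,\xi\in I_j:\ f(\xi)=0\bigr)\leq \bP\bigl(|f(\xi_j)|\leq C\epsilon M\bigr)\leq C'(\epsilon M)^2
\]
by boundedness of the density of $f(\xi_j)$ at the origin. Summing over $j$ and letting first $\epsilon\to 0$, then $M\to\infty$ (using that $\|\nabla f\|_{C^0}\lesssim\|f\|_{H^s}$ has all Gaussian moments), yields $\bP(\exists\,\xi:\ f(\xi)=0)=0$, which implies the ``furthermore'' claim and, vacuously, the regularity statement for $n=2$.

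For $n\geq 3$, regularity of $f^{-1}(0)$ amounts to zero probability for $E:=\{\exists\,\xi\in\SS:\ \bar f(\xi)=0\text{ and }\rank D\bar f(\xi)\leq 1\}$. Since the rank-deficient locus in $\mathrm{Mat}_{2\times(n-1)}$ has codimension $n-2$, the bad set in $\RR^2\oplus\mathrm{Mat}_{2\times(n-1)}$ has codimension $2+(n-2)=n>n-1=\dim\SS$. Covering $\SS$ by $O(\epsilon^{-(n-1)})$ balls of radius $\epsilon$, a witness of $E$ inside $B_\epsilon(\xi_j)$ forces $|\bar f(\xi_j)|\leq C\epsilon$ together with a $C\epsilon$ bound on the second singular value of $D\bar f(\xi_j)$ (after truncating $\|f\|_{C^2}$); the non-degenerate joint density then yields a per-ball estimate $\lesssim \epsilon^2\cdot\epsilon^{n-2}=\epsilon^n$, and summing gives $\bP(E)\lesssim\epsilon\to 0$. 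The main obstacle here is establishing the full non-degeneracy of $\Sigma$ on the joint space of $(\bar f,\nablaS\bar f)$, i.e.\ verifying that the values and tangential gradients at a fixed $\xi_0\in\SS$ of the harmonics $Y_{lm}$ with $\sigma_l>0$ together span $\RR^2\oplus\mathrm{Mat}_{2\times(n-1)}$; this reduces via the addition theorem and standard Gegenbauer identities to a direct computation, after which the rest of the argument is a routine Bulinskaya-type covering coupled with the codimension count sketched above.
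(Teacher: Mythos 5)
Your proposal is correct in substance and rests on the same core computation as the paper's proof: the covariance matrix of $(f\R,f\I,\nablaS f\R,\nablaS f\I)$ in normalized orthogonal (hyperspherical) frames is diagonal, $\te$-independent, with entries given by the convergent sums $\sum\si_\ell^2 c_{\ell n}$ and $\sum\si_\ell^2 c_{\ell n}P'_{\ell n}(1)$ over each parity class, whence a bounded joint density. Where you differ is in how non-degeneracy is converted into the almost-sure statement. The paper linearizes the rank-deficiency condition by introducing the auxiliary field $h(\te,\lambda)=\nablaS f\R(\te)-\lambda\,\nablaS f\I(\te)$ on the enlarged parameter space $\SS\times\RR$ and quotes the Azaïs--Wschebor form of Bulinskaya's lemma for a field from an $n$-dimensional parameter space into $\RR^{n+1}$; this forces them to check that $\det\Sigma(\lambda)$ is minimized at $\lambda=0$ (so the density is bounded on the noncompact $\lambda$-range) and to dispose of the stray case $\nablaS f\I=0\neq\nablaS f\R$ in a footnote via the field $(f\R,f\I,\nablaS f\I)$. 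You instead keep the field $(\bar f,\nablaS\bar f)$ with values in $\RR^2\oplus\mathrm{Mat}_{2\times(n-1)}$ and run the covering argument by hand, paying with the tube estimate that the set of $2\times(n-1)$ matrices of norm $\leq M$ with second singular value $\leq\delta$ has measure $O_M(\delta^{n-2})$ (the codimension of the rank-$\leq1$ locus), which gives the per-ball bound $\epsilon^{2}\cdot\epsilon^{n-2}$ and the total $O(\epsilon)$; this treats all rank-deficient configurations at once, with no auxiliary noncompact parameter and no footnote case, at the cost of proving the covering/tube estimate yourself rather than citing a packaged lemma. Your $n=2$ argument is the same in spirit as the paper's application of the lemma to $(f\R,f\I)$. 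Two points you should make explicit rather than leave implicit: the frame normalization by $\sqrt{g^{ii}}$ needed for the $\te$-independent diagonal covariance, and the positivity of the diagonal entries, which requires $\si_\ell>0$ for at least one mode of each parity (with $\ell\geq1$, and an even $\ell\geq2$ for the gradient block); note, however, that the paper makes exactly the same tacit assumption when it asserts that these variances lie in $(0,\infty)$, so this is not a gap relative to the paper's own argument.
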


\begin{proof}
Let us consider the vector field on the sphere $h(\theta,\lambda):= \nablaS f\R(\theta)-\lambda \nablaS f\I(\theta)$ for $\lambda\in\bR$. If we take local coordinates $(y_1,\dots, y_{n-1})$ around a
point~$\te\in\SS$, the components of~$h$ in this local chart are given by
\begin{equation}\label{localh}
h_{j}(\te,\lambda):=  \sqrt{g^{jj}}\pd_{y_j} f\R(\te)-\lambda \sqrt{g^{jj}}\pd_{y_j} f\I(\te)
\end{equation}
with $1\leq j\leq n-1$, as
\begin{equation}\label{eq:grad fa}
	\nablaS f_a=\sum_{i}{\partial_{y_i} f_a}\sqrt{g^{ii}}\mathbf {e} _{i}
\end{equation}
where $\mathbf {e} _{i}$ is the unit vector in our coordinates. Recall that, by Lemma~\ref{LemConVar}, $f\in C^3(\SS)$ almost surely.

In order to show that $(f\R, f\I, h)$ is a non-degenerate Gaussian vector field, we first analyze the probabilistic structure of $f$ and its derivatives, for which we need to compute the covariance matrix of ($f\R,f\I,\nablaS f\R,\nablaS f\I$). We recall that a non-degenerate Gaussian vector field means that the determinant of its covariance matrix is positive definite everywhere.

First, the covariance between $f\R$ (or derivatives of $f\R$) and
$f\I$ (or derivatives of $f\I$) is zero because they depend on
different independent coefficients, even and odd $l$
respectively. Second, since the Gaussian coefficients have zero mean,
the expected values of $f_a$ and $\nablaS  f_a$ are zero, where $f_a$
denotes either $f\R$ or~$f\I$. For the covariance kernel of $f_a$, notice that if $\te,\te'\in \SS$, we have
	\begin{equation}\label{Kernel}
	\bE\left(f_a(\te)f_a(\te') \right)=\sum_{\ell=0,\text{parity}=a}^\infty  \sigma_\ell^2 c_{ln}P_{\ell n}(\te\cdot \te')\,,
	\end{equation}
	where $P_{\ell n}$ is the Legendre polynomial of degree $\ell$
        in $n$ dimensions,  $c_{ln}$ was defined in~\eqref{addition}
        and the notation $\text{parity}=a$ means that the sum is
        restricted to even~$l$ if $a=\text{R}$ and to odd~$l$ if $a=\text{I}$. From the kernel~\eqref{Kernel} we can deduce the variance of $f_a$,
	$$\bE\left(f_a(\te)f_a(\te) \right)=\sum_{\ell=0,\text{parity}=a}^\infty  \sigma_\ell^2 c_{ln}\in(0,+\infty)\,,$$
	which is independent of $\te$ and finite by our hypothesis on $\sigma_\ell$.

Let us now prove that $f_a$ and its derivatives are independent. Indeed, the covariance between the function and a derivative reads as
	$$\bE\left(f_a(\te)\partial_{y_i}f_a(\te) \right)=\left.\sum_{\ell=0,\text{parity}=a}^\infty  \sigma_\ell^2 c_{ln}  P'_{\ell n}(\te\cdot \te')\te\cdot\partial_{y'_i}\te'\right|_{\te'=\te}=0\,,$$
	where we have used that $\te$ is a point on the unit sphere
        and hence $\te\cdot \te=1$. We also claim that the derivatives
        are independent. To prove it, we assume that
        the vector fields $\{\partial_{y_i}\}$ are orthogonal, i.e.,
        $g_{ij}=0$ if~$1\leq i< j\leq n-1$, where $g_{ij}$ is the
        induced metric on~$\SS$. This can be accomplished, for instance, by taking hyperspherical coordinates. If $g^{ij}$ denotes the inverse matrix of $g_{ij}$, we have
	\begin{align}
	&\bE\left(\sqrt{g^{ii}}\partial_{y_i}f_a(\te)\sqrt{g^{jj}}\partial_{y_j}f_a(\te) \right)= \nonumber\\
	&=\sqrt{g^{ii}}\sqrt{g^{jj}}\left.\sum_{\ell=0,\text{parity}=a}^\infty  \sigma_\ell^2 c_{ln}  \left[ P'_{\ell n}(\te\cdot \te')\partial_{y_i}\te\cdot\partial_{y'_j}\te'+P''_{\ell n}(\te\cdot \te')\left(\partial_{y_i}\te\cdot \te'\right)\left(\hspace{0.1mm} \te\cdot\partial_{y'_j}\te'\right)  \right]\right|_{\te=\te'}\nonumber\\
	&=\sum_{\ell=0,\text{parity}=a}^\infty  \sigma_\ell^2 c_{ln} P'_{\ell n}(1)(\partial_{y_i}\te)^2\delta_{ij}\frac{1}{g_{ii}}=\delta_{ij}\sum_{\ell=0,\text{parity}=a}^\infty \sigma_\ell^2 c_{ln} P'_{\ell n}(1)\label{eq:bE der}
	\end{align}
	where in the second and third equalities we have used the orthogonality condition of the coordinate system and the definition of the metric
	$$g_{ij}=\partial_{y_i}\te\cdot\partial_{y_j}\te\,.$$
	As before, this covariance matrix is strictly positive definite, independent of the point and finite. The finiteness follows from the differential equation satisfied by $P_{\ell n}$,
	$$\displaystyle \left(1-x^{2}\right)^{\frac{3-n}{2}}{\frac {d}{dx}}\left[\left(1-x^{2}\right)^{\frac{n-1}{2}}{\frac {dP_{\ell n}(x)}{dx}}\right]+\ell(\ell+n-2)P_{\ell n}(x)=0\,, $$
	which allows us to compute $P'_{\ell n}(1)=\frac{\ell(\ell+n-2)}{n-1}.$ Using \eqref{eq:grad fa} we conclude that the covariance matrix of ($f\R,f\I,\nablaS f\R,\nablaS f\I$) is diagonal and positive definite.

We are now ready to prove that $(f\R, f\I, h)$ is a non-degenerate
Gaussian vector field. To see this, first notice that the computations above imply that
$f\R$, $f\I$ and their derivatives are independent as they are uncorrelated (i.e., their covariance matrix vanishes), which ensures that
the Gaussian vector field~$(f\R, f\I, h)$ has zero mean (as linear combinations of independent Gaussian random variables are still Gaussian, our field is Gaussian). Also, the
local expression~(\ref{localh}) also ensures that
 $$\bE\left(f\R(x)h(x)\right)= \bE\left(f\I(x)h(x)\right) = 0\,.$$
By \eqref{eq:bE der},
\begin{align*}
	\bE\left(h_i(\theta,\lambda)h_j(\theta,\lambda)\right)&=\delta_{ij}\left(\sum_{\ell=0,\text{parity}=\text{ even}}^\infty \sigma_\ell^2 c_{ln} P'_{\ell n}(1)+\lambda^2 \sum_{\ell=0,\text{parity}=\text{ odd}}^\infty \sigma_\ell^2 c_{ln} P'_{\ell n}(1)\right)\\
	&=:\delta_{ij}(\sigma\R^2+\lambda^2\sigma\I^2).
\end{align*}
We can now use suitable generalizations of Bulinskaya's
lemma~\cite[Proposition 6.11]{AW09} to conclude that
\[
\bPf(\{\exists\, \te,\lambda: f(\te)=0,\; h(\te,\lambda)=0\}) =0\,.
\]
Indeed, $(f\R, f\I, h)$ is a non-degenerate Gaussian vector field going from an $n$-dimensional space to $\bR^{n+1}$ and it is $C^2(\SS)$ almost surely. As the covariance matrix determinant, $\det\Sigma(\lambda)$, attains its minimum value at $\lambda=0$, independent of $\theta$ and strictly positive, the density of $(f\R, f\I, h)$ at zero is bounded for all values of $\theta,\lambda$, i.e.,
$$\rho(x,\lambda)= {\displaystyle{\frac {\exp \left(-{\frac {1}{2}}x^{\mathrm {T} }{ {\Sigma(\lambda) }}^{-1}x\right)}{\sqrt {(2\pi )^{n+1} |{\det {\Sigma(\lambda) }}|}}}}\le \rho(0,0)<\infty.$$
This shows that the zero set of~$f$ is regular almost surely as $\nablaS f\R$ and $\nablaS f\I$ are linearly independent at $f^{-1}(0)$\footnote{It remains to consider the case $\nablaS f\I=0$ but $\nablaS f\R\neq 0$ at some point of $f^{-1}(0)$, but we can discard this event by the same reasoning applied to the easier case $(f\R,f\I,\nablaS f\I)$. }.

When $n=2$, the same argument applied to the Gaussian vector field
$(f\R,f\I)$ shows that
\[
\bPf(\{\exists\, \te: f(\te)=0\})=0\,,
\]
so with probability~1 the function~$f$ does not vanish, and the
lemma follows.
\end{proof}

In the next lemma we compute the probability that~$f$ does not vanish and
that $f$ has a nonempty regular zero set:

\begin{lemma}\label{PropPosProb} The probability that the
  function~$f$ does not vanish on~$\SS$ is~$p_2:=1$ if $n=2$ and $p_n\in
  (0,1)$ if $n\geq3$. Moreover, with probability $1-p_n$ the zero set $f^{-1}(0)$ is
  regular and nonempty.
\end{lemma}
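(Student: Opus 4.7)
The case $n=2$ is already contained in Lemma~\ref{L.buy}. For $n\geq3$, Lemma~\ref{L.buy} shows that $f^{-1}(0)$ is regular almost surely, so once one proves that this set is nonempty with probability exactly $1-p_n$, the regularity clause of the lemma follows. My plan is therefore to establish $0<p_n<1$ by exhibiting two positive-probability events on which $f$ respectively does not vanish and does vanish on~$\SS$.

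Both bounds will be obtained by the same scheme. I view $(f\R,f\I)$ as a Gaussian random element of $H^s(\SS)\times H^s(\SS)$ and identify a concrete pair $(F\R,F\I)$ of finite linear combinations of spherical harmonics lying in the Cameron--Martin space of this joint Gaussian law. By the topological support theorem for Gaussian measures, the support of the law is the $H^s$-closure of the Cameron--Martin space, so every $H^s$-neighbourhood of $(F\R,F\I)$ carries positive probability. Since $s>\tfrac{n+5}{2}$ yields the embedding $H^s\hookrightarrow C^1$, this upgrades to an open $C^1$-neighbourhood. Hence any $C^1$-open property of $(F\R,F\I)$ automatically has positive probability.

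For $p_n>0$, I select $(F\R,F\I)$ nowhere vanishing on~$\SS$, which is a $C^0$-open condition. A concrete example for $n=3$: let $F\R$ be a degree-$2$ real spherical harmonic proportional to $3\cos^2\theta-1$, whose nodal set consists of the two latitude circles at $\theta=\arccos(\pm 1/\sqrt 3)$, and let $F\I$ be proportional to $\cos\theta$, whose nodal set is the equator lying strictly between them. The three parallel circles are disjoint, so $(F\R,F\I)$ never vanishes. For $p_n<1$, I replace $F\I$ with a degree-$1$ harmonic proportional to $\sin\theta\cos\varphi$, whose nodal set is a great circle through the poles intersecting each latitude nodal circle of $F\R$ transversely at two points; this yields four transverse common zeros, a $C^1$-open condition by the implicit function theorem. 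Analogous constructions using low-order spherical harmonics of opposite parity apply in arbitrary $n\geq3$ because $\dim\SS\geq2$.

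The main obstacle is verifying that the chosen $(F\R,F\I)$ lies in the Cameron--Martin space in each case; this requires nonvanishing entries of the variance sequence $\{\sigma_l\}$ at the relevant low orders (for the examples above, $\sigma_1,\sigma_2>0$), a mild non-degeneracy implicit in the hypotheses of Theorem~\ref{T.2}. Once this is granted, the remainder of the argument is the standard interplay between structural stability of the two $C^1$-open properties, the topological support theorem for Gaussian measures, and Sobolev embedding.
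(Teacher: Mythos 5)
Your argument is correct in outline and, at its core, it is the same strategy as the paper's: reduce everything to Lemma~\ref{L.buy} (which gives the case $n=2$ and the almost-sure regularity, so that the ``moreover'' clause follows once $0<p_n<1$ is known), and then show that the law of $f$ assigns positive mass to every $H^s$-neighbourhood of a suitably chosen deterministic function, exploiting that nonvanishing is a $C^0$-open condition and that possessing a transverse (regular) zero is a $C^1$-open condition, with $H^s(\SS)\hookrightarrow C^1$ by Sobolev embedding. The only genuine difference is how the positive-mass step is justified: the paper proves it by hand, splitting the Gaussian series into a finite block plus a tail and using independence to get positive probability of $\|f-f_0\|_{H^s(\SS)}<\ep$ for an arbitrary target $f_0$ (then taking $f_0\equiv 1$ for $p_n>0$, and any smooth $f_0$ with a regular nonempty zero set together with the implicit function theorem for $1-p_n>0$), whereas you invoke the topological support theorem for Gaussian measures through the Cameron--Martin space and work with explicit low-degree targets of the correct parity (your degree-$2$/degree-$1$ zonal examples do respect the even/odd structure of $f\R$, $f\I$ and do have disjoint, respectively transversely intersecting, nodal sets). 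Both routes are valid; yours replaces the paper's elementary two-block computation by a citation to a standard theorem, and is slightly more concrete in its choice of targets.

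One caveat: your claim that $\sigma_1,\sigma_2>0$ is ``implicit in the hypotheses of Theorem~\ref{T.2}'' is not accurate---condition~\eqref{convsil} is only a summability condition and does not force any particular variance to be positive. In fairness, the paper's own proof carries a comparable unstated nondegeneracy assumption (approximating $f_0\equiv1$ tacitly needs $\sigma_0>0$, and the proof of Lemma~\ref{L.buy} tacitly uses that at least one even-degree and one odd-degree variance are positive), so this is a shared implicit hypothesis rather than a defect specific to your argument. Still, to make your write-up honest you should either assume explicitly that all $\sigma_l>0$ (the natural reading of the paper) or adapt the construction to whichever even and odd degrees carry positive variance; the latter requires a short additional argument that within two such degree spaces one can always choose harmonics whose nodal sets are disjoint (for $p_n>0$) or intersect transversely (for $p_n<1$), which your proposal asserts only for the specific degrees $1$ and $2$.
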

\begin{proof}
  Given any function $f_0\in H^s(\SS)$, whose coefficients for the
  expansion in spherical harmonics we will denote by~$a_{lm}^0$, and any $\ep>0$, we claim that
  \begin{equation}\label{bPf}
\bPf\big(\{\|f-f_0\|_{H^s(\SS)}<\ep\}\big)>0\,.
\end{equation}
To prove this, we start by noting that we can take some~$L$, depending
  on~$\ep$, such that
$$\bP_a\left(\sum_{\ell=L}^\infty \sum_{m=1}^{d_l}|a_{lm}-a^0_{\ell
    m}|^2(\ell+1)^{2s}<\frac \ep2\right)>0\,, $$
which is obvious because~$f_0 \in H^s(\SS)$ and~$f$ is in~$H^s(\SS)$ almost surely by
Lemma~\ref{LemConVar}. \eqref{bPf} then follows because
\begin{multline*}
\bPf\big(\{\|f-f_0\|_{H^s(\SS)}<\ep\}\big)\geq\\ \bP_a\left(\sum_{\ell=L}^\infty \sum_{m=1}^{d_l}|a_{lm}-a^0_{\ell
    m}|^2(\ell+1)^{2s}<\frac\ep2\right)\, \bP_a\left(\sum_{\ell=1}^L \sum_{m=1}^{d_l}|a_{lm}-a^0_{\ell m}|^2(\ell+1)^{2s}<\frac\ep2\right)>0\,.
\end{multline*}

For all $n\geq2$, it then suffices to take $f_0:=1$ to conclude that
\[
p_n:=\bP_f(\{f>0\})>0\,;
\]
indeed, by Lemma~\ref{L.buy} one knows that $p_2=1$.
Likewise, when $n\geq3$, one can take any smooth function~$f_0$ whose
zero set is regular and nonempty to conclude, by the implicit function
theorem, that
\[
1-p_n=\bP_f(\{\min_{\SS}|f|=0\})>0\,.
\]
Notice that this argument does not work when $n=2$ because, as $f$ is
complex-valued, the rank of $\nabla f_0$ on~$f_0^{-1}(0)$ must be~2 to
apply the implicit function theorem. Finally, by Lemma~\ref{L.buy}, the nodal set is regular
almost surely, so the lemma follows.
\end{proof}

We are now ready to complete the proof of
Theorem~\ref{T.2}. Lemma~\ref{LemConVar} ensures that $f\in H^s(\SS)$
almost surely. Furthermore, by Lemma~\ref{PropPosProb}, with
probability~$p_n$, $f$~does not vanish, so in this case
Theorem~\ref{T.nodal} ensures that the nodal set of~$u$ has $R/\pi
+o(R)$ components diffeomorphic to~$\SS$ contained in~$B_R$ and only
$O(1)$ components that are not diffeomorphic to~$\SS$. Also by
Lemma~\ref{PropPosProb}, with probability $1-p_n$ the zero set
$f^{-1}(0)$ is regular and nonempty, so Theorem~\ref{T.nodal} ensures
that $N_u(R)=O(1)$. The theorem is then proved.

\section{Proof of Theorem~\ref{T.3}}
\label{S.stability}

Let us denote by~$\mu$ the probability measure on~$\RR^\mathbb{N}$ defined
by the random variables $a_{lm}$, which we now assume to be absolutely
continuous with respect to the Lebesgue measure for $l<l_0$ and
Gaussian distributions $\cN(M_{lm},\si_{lm}^2)$ for $l\geq l_0$. We
denote by $\bPa^0$ and~$\bPa$ the probability measures defined by
random variables $a_{lm}\sim
\cN(0,1)$ and $a_{lm}\sim \cN(0,\si_l^2)$ as in Theorem~\ref{T.2}, respectively.

To prove the theorem it is enough to show that in the first
(respectively, second) case, the measures
$\mu$ and $\bPa^0$ (respectively, $\bPa$) are mutually absolutely
continuous. Kakutani's dichotomy theorem, Proposition 2.21 in~\cite{dPZ14}, ensures that, in the first
case, these measures are mutually absolutely continuous
if and only if the Radon--Nikodym derivative of the measures satisfies
\begin{equation}\label{RN}
\prod_{\ell=0}^\infty \prod_{m=1}^{d_l} \int_{-\infty}^\infty
\bigg(\frac{d\mu_{lm}}{d\bPa^0}\bigg)^{1/2}\, d\bPa^0>0\,
\end{equation}
(being always $\le 1$). Since, for $l\geq l_0$,
\[
\frac{d\mu_{lm}}{d\bPa^0}(x)= \frac1{\si_{lm}} e^{\frac{x^2}2-\frac{(x-M_{lm})^2}{2\si_{lm}^2}}\,,
\]
one has
\[
\int_{-\infty}^\infty
\bigg(\frac{d\mu_{lm}}{d\bPa^0}\bigg)^{1/2}\, d\bPa^0=
(2\pi\si_{lm})^{-1/2}\int_{-\infty}^\infty
e^{-\frac{(x-M_{lm})^2}{4\si_{lm}^2}-\frac{x^2}4}\,dx= \bigg(\frac{2\si_{lm}}{1+\si_{lm}^2}\bigg)^{1/2}e^{-\frac{M_{lm}^2}{4+4\si_{lm}^2}}\,.
\]
Minus the logarithm of the product~(\ref{RN}) for $l\geq l_0$ is then given by the series
\[
\mathcal C:= \sum_{\ell=l_0}^\infty \sum_{m=1}^{d_l}
\frac{M_{lm}^2}{4+4\si_{lm}^2}+  \frac12\sum_{\ell=l_0}^\infty
\sum_{m=1}^{d_l} \log\frac{1+\si_{lm}^2}{2\si_{lm}}\,.
\]
As both terms are necessarily positive and using that a sequence
$a_n\geq1$ satisfies
\[
  \sum_{n=1}^\infty \log a_n<\infty \quad \text{if and only if}\quad
  \sum_{n=1}^\infty (a_n-1)<\infty\,,
\]
we then infer that that necessary
and sufficient condition for $\mathcal C<\infty$ (or, equivalently,
for the product~(\ref{RN}) to be nonzero) is that
\[
\sum_{\ell=l_0}^\infty \sum_{m=0}^{d_l}\Bigg[\frac{M_{lm} ^2}{
  \sigma_{\ell m} ^2+1 } + \frac{(\sigma_{\ell m}
    -1)^2}{\sigma_{\ell m} }\Bigg]<\infty\,.
\]
Likewise, $\mu$ and $\bPa$ are mutually absolutely continuous
if~\eqref{RN} holds with $\bPa^0$ replaced by~$\bPa$, which amounts to
\[
\sum_{\ell=l_0}^\infty \sum_{m=0}^{d_l}\bigg[\frac{M_{lm}^2}{
  \sigma_\ell ^2+\sigma_{lm} ^2} + \frac{(\sigma_{\ell} -\sigma_{lm}) ^2}{\sigma_{\ell} \sigma_{lm} }\bigg]<\infty\,.
    \]
Theorem~\ref{T.3} then follows.

\begin{remark}
 When the probability measure~$\mu$ is a general Gaussian measure
(not necessarily a product), the
 Feldman--Hajek theorem~\cite[Theorem 2.25]{dPZ14} characterizes when
 $\mu$ and~$\bPa$ (or~$\bPa^0$) are mutually absolutely
 continuous in terms of the
 mean and covariance operator of~$\mu$. However, the resulting
 condition is not very illustrative and we have opted not to include it. Nevertheless, this means that the results can be extended to coefficients which are not necessarily independent. Also, similar considerations using Kakutani's theorem can be applied to a product measure whose coefficients are not normal variables.
\end{remark}

\section*{Acknowledgments}

A.E.\ is supported by the ERC Starting Grant~633152. D.P.-S.
is supported by the grants MTM-2016-76702-P (MINECO/FEDER) and Europa Excelencia EUR2019-103821 (MCIU). A.R.\ is supported by the grant MTM-2016-76702-P (MINECO/FEDER). This work is supported in part by the ICMAT--Severo Ochoa grant
SEV-2015-0554 and the CSIC grant 20205CEX001.

\appendix
\section{The decay of~$u$ in terms of the regularity of~$f$}\label{A.regularity}

Standard arguments from the theory of distributions ensure that any
polynomially bounded solution to the Helmholtz equation
\[
\De u + u=0
\]
on~$\RR^n$ can be written as the Fourier transform of a distribution
supported on the unit sphere.
The fundamental result that connects the decay of the solution~$u$
with the regularity of its Fourier transform is a classical result
of Herglotz~\cite[Theorem 7.1.28]{Hor15}. In order to state it, let us denote by
\[
\triple u^2:= \limsup_{R\to\infty} \frac 1R\int_{B_R}u(x)^2\, dx
\]
the Agmon--H\"ormander seminorm of a function~$u$ on~$\RR^n$.

\begin{theorem}[Herglotz]
  A solution to the Helmholtz equation satisfies the decay condition
  \[
\triple u<\infty
  \]
  if and only if there is a function $f\in L^2(\SS)$ such that
  \begin{equation}\label{uf}
u= \widehat{f\, dS}\,.
\end{equation}
Furthermore, this decay estimate is sharp in the sense that there is a universal constant such that
  \[
\frac1C \triple u\leq \|f\|_{L^2(\SS)}\leq C\triple u\,.
  \]
\end{theorem}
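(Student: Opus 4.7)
The plan is to establish the exact identity $\triple u^2 = \frac{(2\pi)^n}{\pi}\|f\|_{L^2(\SS)}^2$, which simultaneously gives both directions of the equivalence and the sharp two-sided bound. The argument splits according to the direction of the implication and rests on Proposition~\ref{P.stationary} for the forward direction and on an exact Parseval--Bessel identity for the converse.

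For the ``if'' direction, assume first $f\in H^s(\SS)$ with $s>(n+5)/2$. Proposition~\ref{P.stationary} then gives $u = \frac{2(2\pi)^{(n-1)/2}}{r^{(n-1)/2}}(U + \cE_1)$ with $U(r,\te):= f\R(\te)\cos(r-r_0) + f\I(\te)\sin(r-r_0)$ and $|\cE_1|\leq C/r$, so $r^{n-1}|u|^2 = 4(2\pi)^{n-1}U^2 + O(1/r)$. Integrating in $\te\in\SS$ and in $r\in[0,R]$ and using that $\tfrac{1}{R}\int_0^R\cos^2(r-r_0)\,dr\to \tfrac12$, similarly for $\sin^2$, and $\tfrac{1}{R}\int_0^R\sin(r-r_0)\cos(r-r_0)\,dr\to 0$, one obtains
\[
\triple u^2 = 2(2\pi)^{n-1}\bigl(\|f\R\|_{L^2(\SS)}^2 + \|f\I\|_{L^2(\SS)}^2\bigr) = \frac{(2\pi)^n}{\pi}\|f\|_{L^2(\SS)}^2.
\]
For general $f\in L^2(\SS)$, density of $H^s$ in $L^2$ together with linearity of $f\mapsto\widehat{f\,dS}$ transfers the identity to the limit.

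For the converse, invoke the classical fact stated in the opening of Appendix~\ref{A.regularity}: any polynomially bounded Helmholtz solution~$u$ equals $\widehat\mu$ for some distribution $\mu$ on $\SS$. Expanding $\mu=\sum_{l,m} i^l b_{lm}Y_{lm}$ as a distribution on $\SS$ (with $b_{lm}$ of polynomial growth) and applying Proposition~\ref{P.sh} gives $u=(2\pi)^{n/2}\sum_{l,m} b_{lm}Y_{lm}(\te)J_{l+\La}(r)/r^\La$. The key auxiliary identity, obtained by integrating the Bessel ODE $r^2 J_\al''+rJ_\al'+(r^2-\al^2)J_\al=0$ against $2J_\al'$, is
\[
\int_0^R r\,J_\al(r)^2\,dr = \frac{R^2}{2}\bigl[J_\al'(R)^2 + (1-\al^2/R^2)J_\al(R)^2\bigr].
\]
Combined with Parseval on $\SS$ and the algebraic relation $n-1-2\La=1$, this produces the exact formula $\int_{B_R}|u|^2\,dx = (2\pi)^n\sum_{l,m}|b_{lm}|^2 I_{l+\La}(R)$ with $I_\al(R):=\int_0^R rJ_\al(r)^2\,dr$. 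Since the Bessel asymptotic $J_\al(r)\sim \sqrt{2/(\pi r)}\cos(r-\pi\al/2-\pi/4)$ yields $I_\al(R)/R\to 1/\pi$ for each fixed $\al$, Fatou forces $\frac{(2\pi)^n}{\pi}\sum_{l,m}|b_{lm}|^2 \leq \triple u^2$, so $\triple u<\infty$ implies $f:=\sum_{l,m} i^{-l}b_{lm}Y_{lm}\in L^2(\SS)$ and $\mu=f\,dS$.

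The main obstacle lies in the density argument in the ``if'' direction: because $\triple\cdot$ is only a seminorm sensitive to the $R\to\infty$ growth, the standard $L^2$-completion does not apply directly. The resolution combines linearity of the smooth-case bound $\triple{\widehat{f\,dS}}\leq C\|f\|_{L^2(\SS)}$ applied to differences (which makes $\{\widehat{f_n\,dS}\}$ Cauchy in the seminorm along any approximating sequence $\{f_n\}\subset H^s$) with the trivial pointwise estimate $|\widehat{(f_n-f)\,dS}(x)|\leq |\SS|^{1/2}\|f_n-f\|_{L^2(\SS)}$ (which gives uniform-on-compacts convergence $u_n\to u$) in order to identify the limit unambiguously and pass the exact identity through.
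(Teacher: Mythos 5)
You should first note that the paper does not actually prove this theorem: it is quoted as a classical result of Herglotz/H\"ormander (\cite[Theorem 7.1.28]{Hor15}), so your write-up has to stand on its own. Much of it does. The converse direction is essentially sound: the identity $\int_0^R rJ_\al(r)^2\,dr=\tfrac{R^2}{2}\big[J_\al'(R)^2+(1-\al^2/R^2)J_\al(R)^2\big]$ is correct, the mode-by-mode Parseval formula is legitimate (spherical-harmonic coefficients of a distribution on $\SS$ grow at most polynomially while $J_{l+\La}(r)$ decays super-exponentially in $l$ on bounded $r$-ranges), and Fatou over the nonnegative series gives $\|f\|_{L^2(\SS)}^2\le \pi(2\pi)^{-n}\triple{u}^2$. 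Two points there need patching but are minor: you must argue that $\triple{u}<\infty$ plus elliptic regularity makes $u$ tempered, and that $\widehat u$, a priori only \emph{supported} on $\SS$, contains no transversal-derivative terms (this follows from $(1-|\xi|^2)\widehat u=0$); without the latter the expansion in $Y_{lm}$, and hence the whole computation, is unjustified. The $H^s$ computation of the constant $\tfrac{(2\pi)^n}{\pi}$ via Proposition~\ref{P.stationary} is also fine.

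The genuine gap is the passage from $H^s$ to general $f\in L^2(\SS)$ in the bound $\triple{u}\le C\|f\|_{L^2(\SS)}$, i.e.\ the left half of the sharp inequality and half of the ``if'' direction. Your proposed resolution --- $(u_n)$ Cauchy in the seminorm plus (locally) uniform convergence $u_n\to u$ --- cannot control $\triple{u}$: the seminorm vanishes on every compactly supported function, so it is completely blind to the local data that uniform convergence on compact sets provides. Concretely, the truncations $u_m:=\chi_{B_m}u$ converge to $u$ locally uniformly and satisfy $\triple{u_m-u_{m'}}=0$ for all $m,m'$, yet $\triple{u_m}=0$ for every $m$ regardless of the value of $\triple{u}$; hence no argument using only those two soft facts can conclude. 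What you actually need is $\triple{u-u_m}\le C\|f-f_m\|_{L^2(\SS)}$, which is precisely the inequality under proof for a density that is merely $L^2$ --- the step is circular. This upper bound is the real content of the Agmon--H\"ormander estimate and requires a quantitative input. Two standard ways to close it: (i) prove the \emph{uniform} bound $\sup_{\al\ge0,\,R\ge1}R^{-1}\int_0^R rJ_\al(r)^2\,dr<\infty$ (this needs Bessel estimates uniform through the turning point $r\approx\al$, not the fixed-$\al$ asymptotics you quote) and then run your own mode-by-mode identity for an arbitrary $L^2$ density, with dominated convergence giving in fact the exact limit; or (ii) prove directly, for all $f\in L^2(\SS)$ and $R\ge1$, that $\int |\widehat{f\,dS}(x)|^2\chi(x/R)\,dx= c\,R^n\iint \widehat\chi\big(R(\xi-\eta)\big)f(\xi)\overline{f(\eta)}\,dS(\xi)\,dS(\eta)\le CR\|f\|_{L^2(\SS)}^2$ for a suitable bump $\chi\ge \mathbf 1_{B_1}$, the last step by a Schur test using only that $\sup_\xi R^n\int_{\SS}|\widehat\chi(R(\xi-\eta))|\,dS(\eta)\le CR$ because $\SS$ is $(n-1)$-dimensional. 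Either route replaces the density argument and completes your proof.
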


An immediate consequence of this result is that the derivatives of any function of
the form~\eqref{uf} with $f\in L^2(\SS)$ have the same decay at
infinity. Indeed, for any~k,
\begin{equation}\label{condk}
\triple{\nabla^k u}\leq C\|\xi^k f\|_{L^2(\SS)}\leq C\| f\|_{L^2(\SS)}\,,
\end{equation}
and in general this is obviously sharp because $\De u = -u$.

However, it is not hard to see that higher regularity of~$f$ translates into higher decay rates
of the {\em angular}\/ derivatives of~$u$. In order to state this
result, let us denote by
\[
\snabla u:= \nabla u- \frac{x\cdot \nabla u}{|x|^2}x
\]
the angular part of the gradient and set
$\langle x\rangle := (1+|x|^2)^{1/2}$.

\begin{proposition}\label{P.regularity}
A solution to the Helmholtz equation satisfies the decay condition
  \[
\triple u_k:= \sum_{j=0}^k\triple{\langle x\rangle^j\, \snabla^j u}<\infty
  \]
  if and only if there is a function $f\in H^k(\SS)$ such that
  \begin{equation*}
u= \widehat{f\, dS}\,.
\end{equation*}
Furthermore, this decay estimate is sharp in the sense that there is a universal constant such that
  \[
\frac1C \triple u_k\leq \|f\|_{H^k(\SS)}\leq C\triple u_k\,.
  \]
\end{proposition}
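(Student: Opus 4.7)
The plan is to bootstrap Herglotz's theorem using the infinitesimal rotations on~$\RR^n$, which intertwine with tangential derivatives on~$\SS$ under the Fourier transform. For each skew-symmetric $A\in\RR^{n\times n}$, set
\[
L_A:=(Ax)\cdot\nabla_x,\qquad D_A:=(A\xi)\cdot\nablaS.
\]
Two facts are easy to check: first, $L_A$ commutes with~$\De+1$, so~$L_A u$ is again a Helmholtz solution; second, since~$A\xi\in T_\xi\SS$ is a Killing field of the round metric (hence divergence-free on~$\SS$), integration by parts inside the defining Fourier integral yields
\[
L_A u=\widehat{(D_A f)\, dS}.
\]
Applied inductively, Herglotz's theorem already in this appendix gives $\triple{L_{A_1}\cdots L_{A_j}u}\asymp\|D_{A_1}\cdots D_{A_j}f\|_{L^2(\SS)}$ for any finite sequence of skew-symmetric matrices, whenever either side is finite.

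Next I would connect these intertwined quantities to $\triple u_k$ and $\|f\|_{H^k(\SS)}$. Fix a basis $\{A^{(\al)}\}_{\al=1}^{\binom{n}{2}}$ of skew-symmetric matrices. Since $Ax\perp x$, one has $L_A u=(Ax)\cdot\snabla u$, and at every $x\neq 0$ the vectors $\{A^{(\al)}x\}_\al$ span~$x^\perp$ with norms comparable to~$|x|$; likewise $\{A^{(\al)}\xi\}_\al$ span $T_\xi\SS$ at each $\xi\in\SS$. Consequently
\[
\sum_\al |L_{A^{(\al)}}u|^2\ \asymp\ |x|^2\,|\snabla u|^2,\qquad \sum_\al |D_{A^{(\al)}}f|^2\ \asymp\ |\nablaS f|^2,
\]
pointwise. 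Integrating over~$B_R$, dividing by~$R$ and passing to the limsup gives the case $k=1$: $\triple{\langle x\rangle\snabla u}\asymp\|\nablaS f\|_{L^2(\SS)}$, the harmless difference between $|x|$ and $\langle x\rangle$ disappearing in the limsup because~$u$ is smooth near the origin.

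For general~$k$, I would iterate: commuting each $L_A$ past the coefficient factor~$(A'x)$ inherited from the previous step produces a principal term $(A^{(\al_1)}x)^T\cdots(A^{(\al_j)}x)^T\cdot\snabla^j u$ plus error terms that are strictly lower order in~$\snabla$ paired with strictly lower powers of~$|x|$ — these come from the commutators $[L_A,L_{A'}]=L_{[A,A']}$ (of the same family) and from the tangential corrections $(Ax)\cdot\nabla(Bx)=BAx$. Summing over multi-indices and using the span property of $\{Ax\}$ yields
\[
\sum_{\al_1,\dots,\al_j}\triple{L_{A^{(\al_1)}}\cdots L_{A^{(\al_j)}}u}^2\ \asymp\ \triple{\langle x\rangle^j\,\snabla^j u}^2\ \text{ modulo }\ \sum_{j'<j}\triple{\langle x\rangle^{j'}\snabla^{j'}u}^2,
\]
with the analogous identity on~$\SS$ controlling $\|D_{A_{\al_1}}\cdots D_{A_{\al_j}}f\|_{L^2(\SS)}^2$ by $\|\nablaS^jf\|_{L^2(\SS)}^2$ modulo lower-order tangential derivatives. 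Summing $j=0,\dots,k$ and invoking the standard Sobolev equivalence $\|f\|_{H^k(\SS)}^2\asymp\sum_{j=0}^k\|\nablaS^jf\|_{L^2(\SS)}^2$ then closes both inequalities simultaneously.

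The main obstacle I anticipate is the careful bookkeeping of the lower-order corrections in the iterative step, i.e.\ making precise that the errors at stage~$j$ are really absorbed by the $j'<j$ components in both $\triple u_k$ and $\|f\|_{H^k(\SS)}$ rather than by something larger. A cleaner but more computational alternative that sidesteps this entirely is to expand $f=\sum_{l,m}f_{lm}Y_{lm}$, use Proposition~\ref{P.sh} to obtain an explicit series for~$u$, diagonalize $\int_{\SS}|\snabla^j u(r\theta)|^2\,dS$ by orthogonality of the spherical harmonics, and conclude from the uniform Bessel asymptotic $\int_0^R|J_{l+\La}(r)|^2\,r\,dr = R/\pi + O(1)$ together with the spectral characterization $\|f\|_{H^k(\SS)}^2\asymp\sum_{l}(1+l)^{2k}\sum_m|f_{lm}|^2$.
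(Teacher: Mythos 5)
Your proposal is correct in substance and follows the same basic strategy as the paper: move angular derivatives of~$u$ over to tangential derivatives of~$f$ by integrating by parts on~$\SS$, and then bootstrap Herglotz's theorem. The paper does this in one line, asserting $\snabla^j u=|x|^{-j}\,\widehat{\nablaS^j f\, dS}$ and quoting Herglotz; you instead work with the rotation generators, where the intertwining $L_A u=\widehat{(D_A f)\,dS}$ is \emph{exact} because $A\xi$ restricted to~$\SS$ is a Killing, hence divergence-free, field, and you then recover $\langle x\rangle^j\snabla^j u$ and $\nablaS^j f$ from compositions of the $L_A$'s and $D_A$'s modulo lower-order terms. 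What your route buys is that the key identity carries no curvature corrections (which the paper's displayed identity suppresses); what it costs is the commutator bookkeeping you flag. That bookkeeping is genuinely routine here, and it may help you to note that it is purely spherical and literally the same on both sides: since $L_A$ is homogeneous of degree zero, $L_{A_1}\cdots L_{A_j}u(r\te)=D_{A_1}\cdots D_{A_j}\big[u(r\,\cdot)\big](\te)$, so one single equivalence between compositions of the $D_A$'s and covariant derivatives $\nablaS^j$ (modulo lower-order spherical derivatives) closes the $u$-side and the $f$-side simultaneously, with subadditivity of the limsup absorbing the lower-order pieces into $\triple u_{j-1}$ and $\|f\|_{H^{j-1}(\SS)}$; in the ``only if'' direction you should also say explicitly that the $L^2$ density produced by Herglotz for $L_{A_1}\cdots L_{A_j}u$ equals $D_{A_1}\cdots D_{A_j}f$ by injectivity of $g\mapsto\widehat{g\,dS}$ on distributions on~$\SS$. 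One caution about your ``cleaner alternative'': the error in $\int_0^R J_{l+\La}(r)^2\,r\,dr=R/\pi+O(1)$ is not uniform in~$l$ (the Bessel function is essentially negligible for $r\lesssim l$), so interchanging the $R\to\infty$ limsup with the sum over~$l$ is precisely the nontrivial content of Herglotz's theorem, and that computational route is not actually shorter than the argument you already have.
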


\begin{proof}
  A simple integration by parts yields
  \[
\snabla^j u=\frac{1}{|x|^j}\widehat{\nablaS^j f\, dS}\,,
\]
so the result follows from Herglotz's theorem.
\end{proof}

\begin{remark}
Roughly speaking, Herglotz's theorem asserts that a solution~$u$ to the
Helmholtz equation on~$\RR^n$ can decay at most as
$|x|^{-\frac{n-1}2}$, on average, and that this sharp decay rate is
attained if and only if~$f$ is in~$L^2(\SS)$. Furthermore, this
proposition says that the $k^{\text{th}}$ angular derivatives of~$u$
can decay at the faster rate $|x|^{-\frac{n-1}2-k}$, and that
this sharp rate is attained in an $L^2$-averaged sense if and only if $f\in H^k(\SS)$.
\end{remark}

The case when $f$ is of lower regularity than~$L^2$, for instance
$f\in H^{-k}(\SS)$ for a positive integer~$k$, can be partly
understood with a similar reasoning. In this case, one can write
\[
f= \sum_{j=0}^k \mathcal L_j f_j
\]
with $f_j\in L^2(\SS)$ and $\mathcal L_j$ a differential operator on~$\SS$
of order~$j$ with smooth coefficients. Furthermore,
\[
\|f\|_{H^{-k}(\SS)}= \sum_{j=0}^k \| f_j\|_{L^2(\SS)}\,.
\]
Therefore, integrating by parts in the distributional formula
\[
u(x)=\int_{\SS} e^{ix\cdot \xi}\, f(\xi)\, dS(\xi)\,,
\]
one easily obtains that
\[
\frac1R\int_{B_R}\frac{u(x)^2}{1+|x|^{2k}}\, dx\leq C\|f\|_{H^{-k}(\SS)}^2\,.
\]
However one should note that, contrary to what happens in the previous results of this Appendix,
these are not the only solutions to the Helmholtz equation with this
decay rate. This is evidenced, e.g., by the solutions whose Fourier transform is
\[
\hu(\xi)= \delta^{(k)}(|\xi|-1)\,.
\]

\bibliographystyle{alpha}

\begin{thebibliography}{BMW19}

\bibitem[AH12]{AH12}
K.~Atkinson and W.~Han.
\newblock {\em Spherical harmonics and approximations on the unit sphere}.
\newblock Springer, New York, 2012.

\bibitem[AW09]{AW09}
J.M. Azais and M.~Wschebor.
\newblock {\em Level Sets and Extrema of Random Processes and Fields}.
\newblock Wiley, New York, 2009.

\bibitem[BMW19]{BMW19}
D. Beliaev, S. Muirhead, and I. Wigman.
\newblock Mean conservation of nodal volume and connectivity measures for
  gaussian ensembles.
\newblock {\em arXiv preprint arXiv:1901.09000}, 2019.

\bibitem[CS19]{CS19}
Y.~Canzani and P.~Sarnak.
\newblock Topology and nesting of the zero set components of monochromatic
  random waves.
  \newblock {\em Comm. Pure Appl. Math.}, 72:343--374, 2019.

  \bibitem[CK92]{CK92} D.L. Colton and R. Kress. \textit{Inverse Acoustic and Electromagnetic Scattering}, Springer, Berlin, 1992.


\bibitem[DPZ14]{dPZ14}
G.~Da~Prato and J.~Zabczyk.
\newblock {\em Stochastic equations in infinite dimensions}.
\newblock Cambridge {U}niversity {P}ress, Cambridge, 2014.

\bibitem[EPS13]{EPS13}
A.~Enciso and D.~Peralta-Salas.
\newblock Submanifolds that are level sets of solutions to a second-order
  elliptic {PDE}.
  \newblock {\em Adv. Math.}, 249:204--249, 2013.

  \bibitem[EPS15]{Acta} A. Enciso and D. Peralta-Salas. {Existence of
      knotted vortex tubes in steady Euler flows}, {\em Acta Math.}
    {214}:61--134, 2015.


  \bibitem[FLL15]{FLL}
  Y. Fyodorov, A. Lerario and E. Lundberg. On the number of connected
  components of random algebraic hypersurfaces, {\em J. Geom. Phys.},
  95:1--20, 2015.

\bibitem[GW16]{Gayet}
D.~Gayet and J.-Y. Welschinger.
\newblock Universal components of random nodal sets.
\newblock {\em Comm. Math. Phys.}, 347:777--797, 2016.

\bibitem[H{\"o}r15]{Hor15}
L.~H{\"o}rmander.
\newblock {\em The analysis of linear partial differential operators I}.
\newblock Reprint of the second edition, Springer, New York, 2015.

\bibitem[Kra14]{Kra14}
I.~Krasikov.
\newblock Approximations for the Bessel and Airy functions with an explicit
  error term.
\newblock {\em LMS J. Comput. Math.}, 17:209--225, 2014.

\bibitem[KW18]{Wigmanbis}
P.~Kurlberg and I.~Wigman.
\newblock Variation of the {N}azarov-{S}odin constant for random plane waves
  and arithmetic random waves.
\newblock {\em Adv. Math.}, 330:516--552, 2018.

\bibitem[NS09]{NS09}
F.~Nazarov and M.~Sodin.
\newblock On the number of nodal domains of random spherical harmonics.
\newblock {\em Amer. J. Math.}, 131:1337--1357, 2009.

\bibitem[NS16]{NS16}
F.~Nazarov and M.~Sodin.
\newblock Asymptotic laws for the spatial distribution and the number of
  connected components of zero sets of gaussian random functions.
  \newblock {\em J. Math. Phys. Anal. Geom.}, 12:205--278, 2016.

\bibitem[RS79]{RS}
  M. Reed and B. Simon. {\em Methods of Modern Mathematical Physics
    III. Scattering Theory}. Academic Press, New York, 1979.

\bibitem[Riv19]{Rivera19}
A.~Rivera.
\newblock Expected number of nodal components for cut-off fractional {G}aussian
  fields.
\newblock {\em J. Lond. Math. Soc.}, 99:629--652, 2019.

\bibitem[SW19]{SW19}
P.~Sarnak and I.~Wigman.
\newblock Topologies of nodal sets of random band-limited functions.
\newblock {\em Comm. Pure Appl. Math.}, 72:275--342, 2019.

\end{thebibliography}

\end{document}